\theoremstyle{plain}
\newtheorem{thm}{Theorem}[section]
\newtheorem{pro}[thm]{Proposition}
\newtheorem{lem}[thm]{Lemma}
\theoremstyle{definition}
\newtheorem{dfn}[thm]{Definition}
\newtheorem{rem}[thm]{Remark}
\theoremstyle{remark}
\newcommand{\Z}{\mathbb{Z}}
\newcommand{\N}{\mathbb{N}}
\newcommand{\C}{\mathbb{C}}
\newcommand{\R}{\mathbb{R}}
\newcommand{\Q}{\mathbb{Q}}
\newcommand{\PS}{\mathbb{P}}
\newcommand{\OO}{\mathcal{O}}
\newcommand{\can}{\mathrm{can}}
\newcommand{\lto}{\longrightarrow}
\DeclareMathOperator{\mult}{mult}
\DeclareMathOperator{\Supp}{Supp}
\DeclareMathOperator{\Spec}{Spec}
\DeclareMathOperator{\Nef}{Nef}
\DeclareMathOperator{\Effb}{\overline{\mathrm{Eff}}}
\DeclareMathOperator{\trdeg}{trdeg}
\title{A note on the abundance conjecture}
\author{Tobias Dorsch}
\address{Mathematisches Institut, Universit\"at Bonn, Endenicher Allee 60, 53115 Bonn, Germany}
\email{dorsch@math.uni-bonn.de}
\author{Vladimir Lazi\'c}
\address{Mathematisches Institut, Universit\"at Bonn, Endenicher Allee 60, 53115 Bonn, Germany}
\email{lazic@math.uni-bonn.de}
\thanks{We were supported by the DFG-Emmy-Noether-Nachwuchsgruppe ``Gute Strukturen in der h\"oherdimensionalen birationalen Geometrie". We would like to thank C.~Xu for helpful discussions and comments about toroidal embeddings, cf.~Remark \ref{rem:toroidal}, and J.~Koll\'ar, S.~Kov\'acs, M.~Musta\c{t}\u{a} and Th.~Peternell for useful conversations related to this work.}
\begin{document}

\begin{abstract}
We prove that the abundance conjecture for non-uniruled klt pairs in dimension $n$ implies the abundance conjecture for uniruled klt pairs in dimension $n$, assuming the Minimal Model Program in lower dimensions.
\end{abstract}

\maketitle

\setcounter{tocdepth}{1}
\tableofcontents

\section{Introduction}

The main outstanding conjecture in the Minimal Model Program for projective varieties in characteristic zero is that every klt pair $(X,\Delta)$ with $K_X+\Delta$ pseudoeffective has a minimal model $(Y,\Delta_Y)$ such that $K_Y+\Delta_Y$ is semiample. Such a minimal model is called a \emph{good model}. It is well known that the existence of good models implies the abundance conjecture, which predicts that every minimal model is good.

We say that a pair is uniruled if the underlying variety is so, and similarly for a non-uniruled pair. In this paper, we show that it suffices to prove the aforementioned conjectures for non-uniruled pairs. More precisely, the following are our main results.

\begin{thm}\label{thm:main2}
Assume the existence of good models for klt pairs in dimensions at most $n-1$. 

If the abundance conjecture holds for non-uniruled klt pairs in dimension $n$, then the abundance conjecture holds for uniruled klt pairs in dimension $n$.
\end{thm}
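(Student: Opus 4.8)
The plan is to reduce to lower dimension by means of the maximal rationally connected (MRC) fibration, using a theorem of Lai as the engine, and to treat separately the case in which $X$ is rationally connected, where that fibration carries no information. First I would make the usual reductions: we may assume $(X,\Delta)$ is $\mathbb{Q}$-factorial, $\Delta$ is a $\mathbb{Q}$-divisor, and $K_X+\Delta$ is nef, so that the assertion to prove becomes that $K_X+\Delta$ is semiample. If $K_X+\Delta$ is big this follows from the basepoint-free theorem, and if $K_X+\Delta\equiv 0$ then $K_X+\Delta\sim_{\mathbb{Q}}0$ and we are done; hence we may assume that $K_X+\Delta$ is pseudoeffective, not big, and not numerically trivial.

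Suppose first that $X$ is not rationally connected. By Graber--Harris--Starr the base $Z$ of the MRC fibration of $X$ is not uniruled, and since $X$ is uniruled we have $1\le\dim Z\le n-1$; therefore the general fibre $F$ of this fibration is rationally connected of dimension $\le n-1$, and after passing to a resolution and the standard modification of the boundary the restricted pair $(F,\Delta_F)$ is klt of dimension $\le n-1$, hence admits a good model by hypothesis. Lai's theorem --- whose hypotheses require, in addition, the existence of good models for klt pairs in dimension $\dim Z\le n-1$, which we have assumed --- then produces a good model of $(X,\Delta)$; since $K_X+\Delta$ is nef, it is semiample.

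The remaining case, $X$ rationally connected, is the main obstacle, since the MRC fibration is now trivial and a non-trivial fibration must be manufactured from $K_X+\Delta$ itself. The plan: as $K_X+\Delta$ is nef but not big, its class lies on the boundary of the pseudoeffective cone, so by the duality theorem of Boucksom--Demailly--Paun--Peternell it is numerically trivial on a non-zero movable curve class; exploiting rational connectedness one aims to represent this class by a covering family of rational curves, whose quotient is a fibration $X\dashto Y$ with $0<\dim Y<n$ and general fibre of dimension $\le n-1$, to which the argument of the previous paragraph then applies. The hard part will be precisely this step --- extracting an honest covering family from a point on the boundary of the movable cone, and disposing of the ``Zariski-type'' alternative in which $K_X+\Delta$ is numerically trivial along no covering family at all --- and it is here that the assumed abundance for non-uniruled klt pairs in dimension $n$ has to be brought in; isolating the precise mechanism by which it enters is the core of the proof.
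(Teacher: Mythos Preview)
Your argument in the non--rationally-connected case is essentially sound: resolving the MRC fibration and invoking Lai's theorem reduces everything to the assumed existence of good models in dimension at most $n-1$, since both the general fibre and the base have dimension strictly less than $n$. Note, however, that this case never uses the hypothesis that abundance holds for non-uniruled klt pairs in dimension $n$; it is settled entirely by the lower-dimensional assumption.

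The rationally connected case is a genuine gap. You acknowledge this yourself: the plan to extract a covering family of $(K_X+\Delta)$-trivial rational curves from a boundary point of the movable cone via BDPP duality is speculative, and there is no known mechanism that makes it work. A nef, non-big, non-numerically-trivial divisor on a rationally connected variety need not be numerically trivial on any covering family, and even when it is, passing from a numerical class to an honest fibration is precisely the content of abundance-type statements, so the argument risks circularity. Crucially, you have not identified how the dimension-$n$ non-uniruled hypothesis enters at all --- and since your other case does not use it, the entire weight of that hypothesis rests on a step you have not supplied.

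The paper's approach is entirely different and bypasses the rationally-connected/non-rationally-connected dichotomy. Starting from a uniruled klt pair $(X,\Delta)$, one first uses the pseudoeffective threshold (Theorem~\ref{thm:tau}) to arrange $K_X+\Delta\sim_\Q D\geq 0$ with $\Supp\Delta\subseteq\Supp D$, then passes to a log resolution. The key construction (Theorem~\ref{thm:reduction}) is a cyclic cover $\pi\colon X'\to X$ ramified along a multiple of $D$, followed by a dlt model and a further log resolution; a careful analysis of discrepancies under finite maps (Proposition~\ref{pro:DicrepFinite}) shows that one arrives at a log smooth pair $(T,\Delta_T)$ with $|K_T|\neq\emptyset$ and with the same $\kappa$ and $\kappa_\sigma$ as $(X,\Delta)$. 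Since $K_T$ is effective, $T$ is non-uniruled, and now the dimension-$n$ non-uniruled abundance hypothesis applies directly to $(T,\Delta_T)$. This is how the hypothesis is actually used: not through a fibration, but by manufacturing a non-uniruled cover on which the canonical class becomes effective.
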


\begin{thm}\label{thm:main3}
Assume the existence of good models for klt pairs in dimensions at most $n-1$. 

Then the existence of good models for non-uniruled klt pairs in dimension $n$ implies the existence of good models for uniruled klt pairs in dimension $n$.
\end{thm}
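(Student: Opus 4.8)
The strategy would be to exploit the uniruledness of $X$ through the MRC fibration, reducing the construction of a good model for $(X,\Delta)$ to a lower-dimensional problem handled by the hypothesis in dimensions at most $n-1$, with the canonical bundle formula providing the descent; the rationally connected case, in which the MRC fibration is trivial, is the real difficulty, and it is there that the hypothesis on non-uniruled pairs in dimension $n$ is needed.

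Let $(X,\Delta)$ be a $\mathbb{Q}$-factorial klt pair with $X$ uniruled and $K_X+\Delta$ pseudoeffective. If $(X,\Delta)$ is of log general type it has a good model by Birkar--Cascini--Hacon--McKernan, so assume $K_X+\Delta$ is not big. Running the $(K_X+\Delta)$-MMP with scaling, its termination --- equivalently the existence of a minimal model --- will follow from the existence of good models in dimensions at most $n-1$ together with the non-vanishing $\kappa(X,K_X+\Delta)\ge 0$ that the argument itself establishes, via Birkar's work on log minimal models; so one may assume $K_X+\Delta$ is nef, and it remains to prove it is semiample. Passing to a resolution, take a morphism $f\colon X\to Z$ representing the MRC fibration: $Z$ is smooth and not uniruled, the general fibre $F$ is rationally connected, and $\dim Z\le n-1$ because $\dim F\ge 1$. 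Since the restriction of a pseudoeffective class to a general fibre is pseudoeffective, $(F,\Delta_F)$ is a klt pair with $K_F+\Delta_F=(K_X+\Delta)|_F$ pseudoeffective. Now run the $(K_X+\Delta)$-MMP over $Z$: over the generic point of $Z$ it is an MMP on $(F,\Delta_F)$, which has a good model by hypothesis since $\dim F\le n-1$, so by special termination it terminates with a relative good model $(X',\Delta')$ over $Z$. Then $K_{X'}+\Delta'$ is semiample over $Z$, defining a fibration $g\colon X'\to T$ over $Z$ whose general fibre $G$ satisfies $(K_{X'}+\Delta')|_G\equiv 0$; the canonical bundle formula for this klt-trivial fibration yields a generalized klt pair $(T,\Delta_T+M_T)$ with $K_{X'}+\Delta'\sim_{\mathbb{Q}}g^{*}(K_T+\Delta_T+M_T)$, again pseudoeffective. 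If the general fibre $(F,\Delta_F)$ of the MRC fibration is not of log general type then $\dim T\le n-1$, so $(T,\Delta_T+M_T)$ has a good model by the hypothesis in lower dimensions (which extends to generalized pairs), and this pulls back to a good model of $(X',\Delta')$, hence of $(X,\Delta)$. If $(F,\Delta_F)$ is of log general type then $g$ is birational, $K_{X'}+\Delta'$ being semiample and big over $Z$, and one descends instead through the Fujino--Mori canonical bundle formula for the Iitaka fibration to a klt pair of dimension $\dim Z\le n-1$, concluding as before.

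There remains the case $\dim Z=0$, that is, $X$ rationally connected, in which the preceding gives nothing and there is no induction on dimension to fall back on. One first needs the non-vanishing $\kappa(X,K_X+\Delta)\ge 0$; granting it, if $\kappa\ge 1$ the Iitaka fibration has base of dimension at most $n-1$ (since $K_X+\Delta$ is not big) and one descends as above, while if $\kappa=0$ one still has to show that $K_X+\Delta$ becomes numerically trivial on a minimal model. So the whole theorem reduces to non-vanishing, and to abundance in Kodaira dimension zero, for rationally connected klt pairs in dimension $n$, and it is precisely at this point that the hypothesis on non-uniruled klt pairs in dimension $n$ becomes indispensable: one links the rationally connected pair $(X,\Delta)$ --- by way of further fibrations and an analysis of covering families of rational curves --- to the non-uniruled setting in dimension $n$, where the required statements are available by assumption. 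I expect this rationally connected case, and in particular the non-vanishing step, to be the main obstacle; the rest is routine bookkeeping with the MMP and the canonical bundle formula.
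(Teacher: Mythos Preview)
Your proposal has a genuine gap, and you locate it yourself: in the rationally connected case you offer no concrete mechanism to pass to a non-uniruled pair in dimension $n$. The phrase ``one links the rationally connected pair\dots by way of further fibrations and an analysis of covering families of rational curves\dots to the non-uniruled setting'' is not an argument but a restatement of the problem. This is not a corner case but the core of the theorem --- the MRC-fibration reduction contributes nothing toward it. There is also a circularity earlier in your outline: you invoke termination of the global $(K_X+\Delta)$-MMP via Birkar, which requires $\kappa(X,K_X+\Delta)\ge 0$ as input, yet in the rationally connected case your argument never establishes non-vanishing. A secondary issue is that the canonical bundle formula in your middle step yields a \emph{generalised} pair on $T$, while the hypothesis in dimensions $\le n-1$ is stated for ordinary klt pairs; bridging this is possible but is not the ``routine bookkeeping'' you suggest.

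The paper's route is entirely different and does not use the MRC fibration at all. First, non-vanishing $\kappa(X,K_X+\Delta)\ge 0$ is obtained \emph{uniformly} for every uniruled terminal pair via the pseudoeffective threshold $\tau=\tau(X,0;\Delta)\in(0,1]$: a Mori-fibre-space argument (Theorem~\ref{thm:tau}) gives $K_X+\tau\Delta\sim_\Q D_\tau\ge 0$, hence $K_X+\Delta\sim_\Q D\ge 0$ with $\Supp\Delta\subseteq\Supp D$. After a log resolution and a small perturbation arranging $\Supp\Delta=\Supp D$ with $\Delta$ reduced, one takes the $m$-fold cyclic cover $\pi\colon X'\to X$ branched along $mD\sim m(K_X+\Delta)$. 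A discrepancy computation under this finite map (Proposition~\ref{pro:DicrepFinite}), together with a dlt model and a further log resolution, produces a log smooth pair $(W,\Delta_W)$ with $K_W\sim_\Q G_W\ge 0$; one more \'etale cover yields a smooth $T$ with $|K_T|\neq\emptyset$. Such $T$ is automatically non-uniruled, and the hypothesis in dimension $n$ applies directly. Thus the reduction to non-uniruled pairs is achieved by a covering construction that treats rationally connected varieties on exactly the same footing as all others --- there is no case split on the MRC base.
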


Observe that by passing to a terminal model, cf.~Theorem \ref{thm:dltblowup}, and by using the main result of \cite{BDPP}, Theorems \ref{thm:main2} and \ref{thm:main3} show that it suffices to prove the existence of good models and the abundance conjecture for terminal pairs $(X,\Delta)$ with $K_X$ pseudoeffective.

The existence of good models for surfaces is classical. For terminal threefolds, minimal models were constructed in \cite{Mor88,Sho85}, whereas minimal models of canonical fourfolds exist by \cite{BCHM,Fuj05}. In higher dimensions, the existence of minimal models for klt pairs of log general type is proved in \cite{HM10,BCHM}, and by different methods in \cite{CL12a,CL13}, whereas abundance holds for such pairs by \cite{Sho85,Kaw85b}. Minimal models for effective klt pairs exist assuming the Minimal Model Program in lower dimensions \cite{Bir11}. 

The abundance conjecture was proved in \cite{Miy87,Miy88a,Miy88b,Kaw92} for terminal threefolds, and extended to log canonical threefold pairs $(X,\Delta)$ in \cite{KMM94}. The proof in \cite{KMM94} proceeds by running a $K_X$-MMP with scaling of $\Delta$ which is $(K_X+\Delta)$-trivial, to end up either with a Mori fibre space, or with a model $(Y,\Delta_Y)$ on which $K_Y+(1-\varepsilon)\Delta_Y$ is nef for every $0\leq\varepsilon\ll1$. In the first case one is almost immediately done by induction even in higher dimensions, whereas in the second case one uses Chern classes, the geometry of surfaces and the case by case analysis of the numerical Kodaira dimension -- the argument follows closely the proof for terminal threefolds. A variation of the first case was implemented in \cite{DHP13}, and we recall it in Theorem \ref{thm:tau} below. However, this does not cover all uniruled pairs, as we explain in Remark \ref{rem:DHP}. Here we take a different approach to reduce to the case of smooth varieties with effective canonical class.

We briefly explain the strategy of the proof. If $(X,\Delta)$ is a uniruled klt pair, then by \cite[Proposition 8.7]{DHP13} we may assume that the adjoint divisor $K_X+\Delta$ is effective. We first show that we may furthermore assume that $X$ is smooth and $\Delta$ is a reduced simple normal crossings divisor, and that there exists an effective $\Q$-divisor $D$ such that $K_X+\Delta\sim_\Q D$ and the supports of $\Delta$ and $D$ are the same. Then we use ramified covers, dlt models and log resolutions to construct a log smooth pair $(W,\Delta_W)$ and a generically finite morphism $w\colon W\to X$ such that $K_W$ is an effective divisor -- we do this by carefully analysing the behaviour of valuations under finite morphisms. We conclude by the construction of $w$ and since the Kodaira dimension and the numerical Kodaira dimension are preserved under proper morphisms, cf.\ Lemma \ref{lem:properpullbacks}.

In fact, our techniques lead to the following main technical result of the paper, which implies Theorems \ref{thm:main2} and \ref{thm:main3}.

\begin{thm}\label{thm:main}
Assume the existence of good models for klt pairs in dimensions at most $n-1$. 

If good models exist for log smooth klt pairs $(X,\Delta)$ of dimension $n$ such that the linear system $|K_X|$ is not empty, then good models exist for uniruled klt pairs in dimension $n$.
\end{thm}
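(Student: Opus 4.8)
The plan is to follow the strategy outlined in the introduction: reduce the existence of a good model to an equality of Kodaira and numerical Kodaira dimensions, and then prove that equality by exhibiting a ramified cover under which the adjoint divisor turns into an effective canonical divisor.

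So let $(X,\Delta)$ be a uniruled klt pair of dimension $n$ with $K_X+\Delta$ pseudoeffective; we want to produce a good model. By \cite[Proposition~8.7]{DHP13} we may assume that $K_X+\Delta$ is effective. Then $(X,\Delta)$ has a minimal model $(Y,\Delta_Y)$ by \cite{Bir11}, so that $K_Y+\Delta_Y$ is nef; since the induced crepant birational map preserves both $\kappa$ and $\kappa_\sigma$, and since a nef adjoint divisor on a klt pair is semiample as soon as $\kappa=\kappa_\sigma$ (this is the case-by-case analysis of the numerical Kodaira dimension which is standard given the assumption in dimensions at most $n-1$), it is enough to prove that $\kappa(X,K_X+\Delta)=\kappa_\sigma(X,K_X+\Delta)$. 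Passing to a log resolution and perturbing the boundary, and using that these operations do not affect this equality (equivalently, the stability of good models under them), I would further reduce to the case in which $(X,\Delta)$ is log smooth, $\Delta=\sum_i\Delta_i$ is reduced, and there is an effective $\Q$-divisor $D=\sum_i d_i\Delta_i$ with all $d_i>0$ such that $K_X+\Delta\sim_\Q D$ and $\Supp D=\Supp\Delta$. Now fix a positive integer $N$ divisible enough that $N(K_X+\Delta)\sim ND$ and $(N-1)d_i\geq 1$ for all $i$. Then $N(K_X+\Delta)-\Delta$ is an integral divisor, it satisfies $N(K_X+\Delta)-\Delta\sim_\Q (K_X+\Delta)+\big((N-1)D-\Delta\big)$ with $(N-1)D-\Delta=\sum_i((N-1)d_i-1)\Delta_i\geq 0$ — here the inclusion $\Supp\Delta\subseteq\Supp D$ is exactly what is used — and it satisfies $N(K_X+\Delta)-\big(N(K_X+\Delta)-\Delta\big)=\Delta\geq 0$; hence, by monotonicity of $\kappa$ and of $\kappa_\sigma$ under addition of effective divisors,
\[
\kappa\big(X,N(K_X+\Delta)-\Delta\big)=\kappa(X,K_X+\Delta),\qquad \kappa_\sigma\big(X,N(K_X+\Delta)-\Delta\big)=\kappa_\sigma(X,K_X+\Delta).
\]

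The core of the argument — and the step I expect to be the main obstacle — is the construction of a generically finite morphism $w\colon W\to X$ together with a log smooth klt pair $(W,\Delta_W)$ such that the linear system $|K_W|$ is nonempty and $K_W+\Delta_W$ has the same Kodaira and numerical Kodaira dimension as $w^*(K_X+\Delta)$. I would build $W$ by composing a Kawamata-type cover of $X$ along $\Delta$ (chosen so as to make $\Supp D$ sufficiently divisible, and thereby the cover below as mild as possible) with the cyclic cover of degree $N$ associated to $\OO_X(K_X+\Delta)$ and a section of its $N$-th power cutting out $ND\geq 0$, and finally a log resolution. By the ramification formula, the canonical bundle of such a cover is $w^*\OO_X\big(K_X+(N-1)(K_X+\Delta)\big)=w^*\OO_X\big(N(K_X+\Delta)-\Delta\big)$ up to an effective $w$-exceptional divisor; since $N(K_X+\Delta)-\Delta\sim ND-\Delta\geq 0$, this already shows $|K_W|\neq\emptyset$. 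The delicacy is to achieve all of the requirements simultaneously: the cover must have at worst canonical singularities, for otherwise the final log resolution would destroy the effectivity of $K_W$; the exceptional divisors it introduces must not violate the klt property of $(W,\Delta_W)$; and $K_W+\Delta_W$ must remain governed by $w^*\big(N(K_X+\Delta)-\Delta\big)$. Controlling this is precisely the careful analysis of the behaviour of discrepancies and valuations under finite morphisms to which the introduction alludes.

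Granting this construction, the proof is concluded as follows. Since $w$ is proper, Lemma~\ref{lem:properpullbacks} gives
\[
\kappa(W,K_W+\Delta_W)=\kappa\big(W,w^*(K_X+\Delta)\big)=\kappa(X,K_X+\Delta),
\]
and the identical computation with $\kappa_\sigma$ in place of $\kappa$. On the other hand $(W,\Delta_W)$ is a log smooth klt pair of dimension $n$ with $|K_W|\neq\emptyset$, so by hypothesis it admits a good model, and therefore $\kappa(W,K_W+\Delta_W)=\kappa_\sigma(W,K_W+\Delta_W)$. Combining these equalities yields $\kappa(X,K_X+\Delta)=\kappa_\sigma(X,K_X+\Delta)$, which by the reduction carried out above shows that $(X,\Delta)$ has a good model.
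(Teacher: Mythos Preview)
Your overall plan matches the paper's: reduce via Lemma~\ref{lem:Kappa=KappaSigma} to proving $\kappa=\kappa_\sigma$, arrange the log smooth setup with $\Supp\Delta=\Supp D$ (the paper does this as you indicate, using Theorem~\ref{thm:tau} to write $K_X+\Delta\sim_\Q D_\tau+(1-\tau)\Delta$ so that $\Supp\Delta\subseteq\Supp D$ before perturbing), and then build a generically finite cover on which the canonical linear system is nonempty. However, the heart of the argument---the cover construction---is only sketched, and your sketch has a genuine gap. Your ramification formula $K_W=w^*\big(N(K_X+\Delta)-\Delta\big)$ is correct only when the cyclic cover is totally ramified along every component of $D$; once the multiplicities $Nd_i$ share common factors with $N$ this fails, the cover is singular, and you do not show that a preliminary Kawamata cover can be chosen to keep these singularities canonical---which, as you yourself note, is essential for the subsequent log resolution not to destroy effectivity of $K_W$. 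Nor do you produce the klt boundary $\Delta_W$ with the required control on $\kappa$ and $\kappa_\sigma$.

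The paper's construction is different, and the difference is the content of the proof. One takes the minimal $m$ with $m(K_X+\Delta)\sim mD$ and forms a single cyclic cover $\pi\colon X'\to X$ of degree $m$; Hurwitz gives $K_{X'}+\Delta'=\pi^*(K_X+\Delta)$ with $\Delta'$ reduced, and $(X',\Delta')$ is log canonical by Proposition~\ref{pro:DicrepFinite}. The step you are missing is then to pass to a \emph{dlt model} $(Z,\Delta_Z)\to(X',\Delta')$ before taking a log resolution $g\colon W\to Z$. The key technical input is again Proposition~\ref{pro:DicrepFinite}: since $(X,\Delta)$ is log smooth with $\Delta$ reduced, every discrepancy over $(X,\Delta)$ is an integer, and the relation $a'+1=r(a+1)$ forces the same for all discrepancies over $(X',\Delta')$ and hence over $(Z,\Delta_Z)$. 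This integrality makes $\Delta_W$ and $E_W$ integral, so $G_W:=g^*G_Z+E_W-\Delta_W$ is Cartier with $K_W\sim_\Q G_W$; effectivity of $G_W$ then follows from $\Supp\Delta_Z\subseteq\Supp G_Z$, which is itself deduced from Proposition~\ref{pro:DicrepFinite} together with the definition of a dlt model. A final \'etale cover upgrades $\sim_\Q$ to $\sim$, yielding $|K_T|\neq\emptyset$. The dlt model, the integrality-of-discrepancies argument, and the support containment are the substance of Theorem~\ref{thm:reduction} and are absent from your proposal.
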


As a by-product, we obtain in Lemma \ref{lem:IndexOne} a result which can be viewed as a global version of the index one cover \cite[Corollary 1.9]{Rei80}, and might be of independent interest.

\section{Notation and previous results}\label{sec:auxiliary}

In this section we gather previous results which will be used in Section \ref{sec:3}. We pay special attention to the behaviour of discrepancies under finite morphisms -- this is also known, but we provide the details for the benefit of the reader.

Throughout the paper we work over $\C$. A \emph{pair} $(X,\Delta)$ consists of a normal variety $X$ and a Weil $\Q$-divisor $\Delta\geq0$ such that the divisor $K_X+\Delta$ is $\Q$-Cartier. Such a pair is \emph{log smooth} if $X$ is smooth and if the support of $\Delta$ has simple normal crossings. We use extensively singularities of pairs, the standard reference is \cite{KM98}. Unless explicitly stated otherwise, all varieties are normal and projective.

\subsection{Terminal and dlt models}

Terminal and dlt models allow us to make the singularities of pairs simpler, in the first case by replacing klt by terminal singularities, and in the second case by replacing log canonical by dlt singularities. For us, particularly the dlt models and their precise definition will be useful.

\begin{dfn}
Let $(X,\Delta)$ be a klt pair. A pair $(Y,\Gamma)$ together with a proper birational morphism $f\colon Y \to X$ is a \emph{terminal model of $(X,\Delta)$} if the following holds:
\begin{enumerate}
\item[(i)] the pair $(Y,\Gamma)$ is terminal,
\item[(ii)] $Y$ is $\Q$-factorial, 
\item[(iii)] $K_Y+\Gamma\sim_\Q f^*(K_X+\Delta)$.
\end{enumerate}
\end{dfn}

\begin{dfn}
Let $(X,\Delta)$ be a log canonical pair. A pair $(Y,\Gamma)$ together with a proper birational morphism $f\colon Y\to X$ is a \emph{dlt model of $(X,\Delta)$} if the following holds:
\begin{enumerate}
\item[(i)] the pair $(Y,\Gamma)$ is dlt,
\item[(ii)] the divisor $\Gamma$ is the sum of $f^{-1}_*\Delta$ and all exceptional prime divisors,
\item[(iii)] $Y$ is $\Q$-factorial, 
\item[(iv)] $K_Y+\Gamma\sim_\Q f^*(K_X+\Delta)$.
\end{enumerate}
\end{dfn}

The starting point is the following existence result.

\begin{thm}\label{thm:dltblowup}
Let $(X,\Delta)$ be a pair. 
\begin{enumerate}
\item[(a)] If $(X,\Delta)$ is klt, then a terminal model of $(X,\Delta)$ exists.
\item[(b)] If $(X,\Delta)$ is log canonical, then a dlt model of $(X,\Delta)$ exists.
\end{enumerate}
\end{thm}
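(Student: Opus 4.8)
The plan is to deduce both statements from the existence of minimal models for klt pairs over a base, which in this setting is available without any conjectural input because we only need finite generation of the relevant canonical ring / termination in the setting of a birational morphism. First I would fix a log resolution $g\colon Z\to X$ of $(X,\Delta)$, write $K_Z+\Gamma_Z = g^*(K_X+\Delta)+E$ with $\Gamma_Z$ and $E$ effective with no common components, $\Gamma_Z$ the strict transform of $\Delta$ plus all $g$-exceptional prime divisors in case (b), and $\Gamma_Z$ chosen so that $(Z,\Gamma_Z)$ is log smooth. The point is then to run a suitable $(K_Z+\Gamma_Z)$-MMP over $X$ that contracts exactly the divisors we want to remove.

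For part (b), I would take $\Gamma_Z = g^{-1}_*\Delta + \sum E_i$ where the sum is over all $g$-exceptional prime divisors. Since $(X,\Delta)$ is log canonical, one has $K_Z+\Gamma_Z = g^*(K_X+\Delta) + F$ with $F\ge 0$ exceptional and $\Supp F$ not containing any $E_i$ with discrepancy $-1$. Running a $(K_Z+\Gamma_Z)$-MMP over $X$ — which terminates because $K_Z+\Gamma_Z \sim_{\Q,X} F \ge 0$ and the MMP is $F$-nonnegative, so it is really an MMP for the exceptional effective divisor $F$ and contracts precisely $\Supp F$ — yields $f\colon Y\to X$ with $K_Y+\Gamma_Y$ nef over $X$; by the negativity lemma $K_Y+\Gamma_Y = f^*(K_X+\Delta)$, giving (iv), and $Y$ is $\Q$-factorial and the MMP is automatically dlt-preserving, giving (i) and (iii). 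Condition (ii) holds because the only exceptional divisors we have contracted are those in $\Supp F$, i.e.\ those with discrepancy $>-1$ over $(X,\Delta)$; all divisors with discrepancy $-1$, of which there are finitely many, survive on $Y$, and these together with $f^{-1}_*\Delta$ are exactly the components of $\Gamma_Y$.

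For part (a), the argument is the same but easier: with $(X,\Delta)$ klt, all exceptional divisors have discrepancy $>-1$, so taking $\Gamma_Z = g^{-1}_*\Delta + \sum (1-a_i)E_i$ where $a_i = a(E_i;X,\Delta) > -1$, we get $K_Z+\Gamma_Z \sim_{\Q,X} 0$ and the MMP over $X$ contracts every exceptional divisor, landing on a $\Q$-factorial terminal pair $(Y,\Gamma)$ with $K_Y+\Gamma\sim_\Q f^*(K_X+\Delta)$; here $f$ may still be non-trivial if $X$ was not $\Q$-factorial, but $(Y,\Gamma)$ is terminal by construction. The main obstacle in writing this up honestly is justifying termination of these MMPs without circularity: one must invoke that an MMP with scaling for a pair whose log canonical divisor is relatively numerically trivial (or relatively equivalent to a fixed effective exceptional divisor) terminates, which follows from \cite{BCHM} via the special termination / finite generation machinery applied in the birational (projective-over-a-base) setting, rather than from any of the higher-dimensional conjectures assumed elsewhere in the paper. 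I would cite \cite{BCHM} and \cite{KM98} for the negativity lemma and the bookkeeping of discrepancies, and remark that statement (b) is what is recorded in the literature as the "dlt blow-up".
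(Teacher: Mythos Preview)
The paper does not argue this theorem at all: it simply cites \cite[Corollary 1.4.3]{BCHM} for (a) and \cite[Theorem 3.1]{KK10} for (b). Your sketch for (b) is, up to the termination bookkeeping you yourself flag, exactly the argument recorded in \cite{KK10}, so on that part there is nothing to add beyond noting that the perturbation making the pair klt (so that \cite{BCHM} applies) should be made explicit.

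Your part (a), however, does not work as written. First, with $\Gamma_Z = g^{-1}_*\Delta + \sum (1-a_i)E_i$ one computes
\[
K_Z+\Gamma_Z = g^*(K_X+\Delta) + \textstyle\sum_i E_i,
\]
not $g^*(K_X+\Delta)$, so the claim $K_Z+\Gamma_Z\sim_{\Q,X}0$ is false; and in any case this $\Gamma_Z$ need be neither effective (if some $a_i>1$) nor a boundary (if some $a_i<0$ the coefficient $1-a_i$ exceeds $1$). Second, the two assertions ``$K_Z+\Gamma_Z\sim_{\Q,X}0$'' and ``the MMP over $X$ contracts every exceptional divisor'' are mutually exclusive: if the log divisor is numerically trivial over $X$ there are no $(K_Z+\Gamma_Z)$-negative extremal rays over $X$, and the MMP is vacuous. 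Third, even after fixing the boundary to $\Gamma_Z=g^{-1}_*\Delta+\sum_{a_i\le 0}(-a_i)E_i$, so that $K_Z+\Gamma_Z\sim_{\Q,X}\sum_{a_i>0}a_iE_i\ge 0$ and the relative MMP contracts exactly the $E_i$ with $a_i>0$, the output $(Y,\Gamma_Y)$ is terminal only if \emph{every} geometric valuation with $a(\cdot,X,\Delta)\le 0$ already appears as a prime divisor on $Z$. This is not automatic for an arbitrary log resolution $g$: for instance two components of $g_*^{-1}\Delta$ with coefficients close to $1$ meeting transversally on $Z$ produce, upon blowing up their intersection, a valuation of nonpositive discrepancy not visible on $Z$. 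The construction in \cite{BCHM} handles this by first proving that there are only finitely many valuations with discrepancy in $(-1,0]$ and then invoking their extraction theorem for a prescribed finite set of such divisors; your sketch omits both of these points.
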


\begin{proof}
For part (a), see \cite[Corollary 1.4.3]{BCHM} and the paragraph after that result. Part (b) is \cite[Theorem 3.1]{KK10}.
\end{proof}

Recall that a variety $X$ of dimension $n$ is uniruled if there is a dominant rational map $\PS^1\times Y\dashrightarrow X$, for some variety $Y$ with $\dim Y=n-1$. This property is preserved in the birational equivalence class of $X$. The following result is fundamental.

\begin{thm}\label{thm:BDPP}
Let $X$ be a projective variety with canonical singularities. Then $X$ is uniruled if and only if $K_X$ is not pseudoeffective.
\end{thm}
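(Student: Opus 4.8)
The plan is to reduce the assertion to the case of smooth projective varieties, where it is exactly the main theorem of \cite{BDPP}, and then to transport both uniruledness and pseudoeffectivity of the canonical class across a resolution of singularities. Fix a resolution $\pi\colon\widetilde X\to X$. Since $X$ has canonical singularities and $K_X$ is $\Q$-Cartier, we may write $K_{\widetilde X}\sim_\Q\pi^*K_X+E$ with $E\geq0$ an effective $\pi$-exceptional $\Q$-divisor. Moreover $X$ is uniruled if and only if $\widetilde X$ is, since uniruledness is a birational invariant, as recalled above.

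Next I would check that $K_X$ is pseudoeffective if and only if $K_{\widetilde X}$ is. If $K_X$ is pseudoeffective, then so is the pullback $\pi^*K_X$, and adding the effective divisor $E$ keeps the class pseudoeffective, so $K_{\widetilde X}$ is pseudoeffective. Conversely, if $K_{\widetilde X}$ is pseudoeffective, then so is its pushforward $\pi_*K_{\widetilde X}$, because pushforward along a proper birational morphism sends pseudoeffective classes to pseudoeffective classes; and since $E$ is $\pi$-exceptional we have $\pi_*K_{\widetilde X}=\pi_*(\pi^*K_X+E)=K_X$, whence $K_X$ is pseudoeffective.

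Finally, applying \cite{BDPP} to the smooth projective variety $\widetilde X$ gives that $\widetilde X$ is uniruled if and only if $K_{\widetilde X}$ is not pseudoeffective. Chaining this with the two equivalences above yields that $X$ is uniruled if and only if $K_X$ is not pseudoeffective.

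The only deep ingredient is the smooth case from \cite{BDPP}, and within it only the implication ``$K_{\widetilde X}$ not pseudoeffective $\Rightarrow$ $\widetilde X$ uniruled'', which rests on the duality between the pseudoeffective cone of divisors and the cone of movable curves; the reverse implication is elementary, a covering family of rational curves producing a movable curve class of negative intersection with $K_{\widetilde X}$. The remaining points — the discrepancy formula for canonical singularities and the stability of pseudoeffectivity under pullback and under pushforward along a birational morphism — are routine, the only mild care needed being the pushforward statement, which follows from continuity of $\pi_*$ together with the fact that it carries effective divisors to effective divisors.
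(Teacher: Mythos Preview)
Your argument is correct and is precisely the reduction the paper has in mind: the paper's own proof just cites \cite[Corollary 0.3]{BDPP} for the smooth case and says the canonical case ``follows immediately'', and what you have written is exactly that immediate reduction, spelled out in full. There is nothing to add.
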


\begin{proof}
For manifolds, this is \cite[Corollary 0.3]{BDPP}. The result for varieties with canonical singularities follows immediately.
\end{proof}

\subsection{Good models}

We recall the definition of log terminal and good models.

\begin{dfn}
Let $X$ and $Y$ be $\Q$-factorial varieties, and let $D$ be a $\Q$-divisor on $X$. A birational contraction $f\colon X\dashrightarrow Y$ is a \emph{log terminal model for $D$} if $f_*D$ is nef, and if there exists a resolution $(p,q)\colon W\to X\times Y$ of the map $f$ such that $p^*D=q^*f_*D+E$, where $E\geq0$ is a $q$-exceptional $\Q$-divisor which contains the whole $q$-exceptional locus in its support. If additionally $f_*D$ is semiample, the map $f$ is a \emph{good model} for $D$.
\end{dfn}

Note that if $(X,\Delta)$ is a klt pair, then it has a good model if and only if there exists a Minimal Model Program with scaling of an ample divisor which terminates with a good model of $(X,\Delta)$, cf.\  \cite[Propositions 2.4 and 2.5]{Lai11}.

\begin{thm}\label{thm:birkar}
Assume the existence of good models for klt pairs in dimensions at most $n-1$. 

Let $(X,\Delta)$ be a klt pair of dimension $n$ such that $\kappa(X,K_X+\Delta)\geq0$. Then $(X,\Delta)$ has a log terminal model.
\end{thm}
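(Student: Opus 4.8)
The plan is to reduce the statement to the existence of log minimal models for effective klt pairs proved in \cite{Bir11}. First, observe that $\kappa(X,K_X+\Delta)\ge0$ means precisely that $K_X+\Delta$ is $\Q$-linearly equivalent to an effective $\Q$-divisor $D\ge0$, so that $(X,\Delta)$ is an \emph{effective} klt pair in the sense of Birkar. If $X$ is not already $\Q$-factorial I would first replace it by a small $\Q$-factorialization, which exists by \cite{BCHM}; this is an isomorphism in codimension one, does not change the pullback of $K_X+\Delta$, hence preserves $\kappa\ge0$, and a log terminal model of the $\Q$-factorialization is also one of $(X,\Delta)$. So from now on $X$ is assumed $\Q$-factorial.

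Next I would check that the inductive hypothesis feeds \cite{Bir11}. The assumed existence of good models for klt pairs in dimension at most $n-1$ implies, by \cite[Propositions 2.4 and 2.5]{Lai11} (cf.\ the remark after the definition of a good model above, and \cite{BCHM} in the non-pseudoeffective case), that for every klt pair of dimension at most $n-1$ some Minimal Model Program with scaling of an ample divisor can be run to termination; in other words the log Minimal Model Program holds in dimension at most $n-1$, which is exactly the input required in \cite{Bir11}.

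With these reductions in place, I would run a $(K_X+\Delta)$-Minimal Model Program with scaling of a general ample $\Q$-divisor $A$. The flips occurring are klt flips and exist by \cite{BCHM}. To make the non-vanishing input of Birkar's argument usable, one may first perturb $\Delta\rightsquigarrow\Delta+\varepsilon D$ for $0<\varepsilon\ll1$: this keeps the pair klt and multiplies $K_X+\Delta$ by $1+\varepsilon$ up to $\Q$-linear equivalence, so it does not alter the Program, while achieving $\Supp D\subseteq\Supp(\Delta+\varepsilon D)$. Birkar's induction in \cite{Bir11} then shows that this Program terminates, using the log Minimal Model Program in dimension at most $n-1$ via special termination together with the limiting argument for the scaling threshold. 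Since $K_X+\Delta$ is $\Q$-linearly equivalent to the effective divisor $D$ it is pseudoeffective, and pseudoeffectivity is preserved by the steps of the Program, so the Program cannot terminate with a Mori fibre space; hence it terminates with a birational contraction $f\colon X\dashrightarrow Y$ for which $f_*(K_X+\Delta)$ is nef, that is, with a log terminal model of $(X,\Delta)$. The one genuinely hard point is the termination statement itself: proving that the Program with scaling stops for an effective klt pair, granted only the Minimal Model Program in dimension $n-1$, is the main achievement of \cite{Bir11}, and I would invoke it rather than reprove it.
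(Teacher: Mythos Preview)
Your overall strategy---reduce to Birkar's existence theorem for effective pairs---matches the paper's. The gap is in how you verify the inductive input that \cite{Bir11} actually requires. As the paper makes explicit, \cite[Corollary~1.7]{Bir11} needs the existence of minimal models (in the sense of Birkar--Shokurov) for \emph{$\Q$-factorial dlt} pairs $(Y,\Gamma)$ of dimension at most $n-1$ with $K_Y+\Gamma$ pseudoeffective, not merely for klt pairs. Your appeal to \cite{Lai11} only yields termination of the MMP with scaling for klt pairs; it says nothing directly about dlt pairs whose boundary has components with coefficient~$1$, and the special-termination machinery inside Birkar's argument genuinely runs through such pairs. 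The paper closes this gap in two steps: given a pseudoeffective $\Q$-factorial dlt pair $(Y,\Gamma)$ in dimension $\le n-1$, it first invokes \cite[Theorem~1.5]{Gon11} together with the good-model hypothesis to obtain $\kappa(Y,K_Y+\Gamma)\ge 0$, and then applies \cite[Corollary~1.7]{Bir11} again, now in dimension $\le n-1$, by induction on the dimension. Without this bridge your claim that ``the log Minimal Model Program holds in dimension at most $n-1$'' is not established in the generality Birkar needs.

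Two minor remarks. The $\Q$-factorialization and the perturbation $\Delta\rightsquigarrow\Delta+\varepsilon D$ you describe are already internal to \cite{Bir11} and are absorbed by the paper's reference to the paragraph after \cite[Definition~2.2]{Bir11}; rehearsing them adds nothing once you have decided to invoke Birkar's theorem as a black box. That same reference is also what reconciles Birkar's notion of log minimal model with the paper's notion of log terminal model, a compatibility you do not address.
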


\begin{proof}
By \cite[Corollary 1.7 and the paragraph after Definition 2.2]{Bir11}, it is enough to show that every $\Q$-factorial dlt pair $(Y,\Gamma)$ of dimension at most $n-1$ such that $K_Y+\Gamma$ is pseudoeffective has a minimal model in the sense of Birkar and Shokurov, cf.\ \cite[Definition 2.1]{Bir11}. To this end, note first that $\kappa(Y,K_Y+\Gamma)\geq0$ by our assumption and by \cite[Theorem 1.5]{Gon11}. Then we conclude by induction and by \cite[Corollary 1.7]{Bir11} again.
\end{proof}

Kawamata \cite{Kaw85} was the first to realise that the numerical Kodaira dimension, in the case of nef divisors, plays a crucial role in the abundance conjecture. The concept was generalised in \cite{Nak04} to the case of pseudoeffective divisors.

\begin{dfn}
Let $X$ be a smooth projective variety and let $D$ be a pseudoeffective $\Q$-divisor on $X$. If we denote
$$\sigma(D,A)=\sup\big\{k\in\N\mid \liminf_{m\rightarrow\infty}h^0(X,\lfloor mD\rfloor+A)/m^k>0\big\}$$
for a Cartier divisor $A$ on $X$, then the {\em numerical Kodaira dimension\/} of $D$ is
$$\kappa_\sigma(X,D)=\sup\{\sigma(D,A)\mid A\textrm{ is ample}\}.$$
If $X$ is a projective variety and if $D$ is a pseudoeffective $\Q$-Cartier $\Q$-divisor on $X$, then we set $\kappa_\sigma(X,D)=\kappa_\sigma(Y,f^*D)$ for any birational morphism $f\colon Y\to X$ from a smooth projective variety $Y$.
\end{dfn}

The function $\kappa_\sigma$ behaves similarly to the Kodaira dimension under proper pullbacks:

\begin{lem}\label{lem:properpullbacks}
Let $D$ be a $\Q$-divisor on a $\Q$-factorial variety $X$, and let $f\colon Y\to X$ be a proper surjective morphism. Then 
$$\kappa(X,D)=\kappa(Y,f^*D)\quad\text{and}\quad\kappa_\sigma(X,D)=\kappa_\sigma(Y,f^*D).$$ 
If $f$ is birational and $E$ is an effective $f$-exceptional divisor on $Y$, then 
$$\kappa(X,D)=\kappa(Y,f^*D+E)\quad\text{and}\quad\kappa_\sigma(X,D)=\kappa_\sigma(Y,f^*D+E).$$ 
\end{lem}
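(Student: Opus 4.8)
First I would reduce everything to the case where $X$ is smooth: if $f\colon Y\to X$ is proper surjective, I can take a resolution $g\colon X'\to X$, a resolution $Y'$ of the main component of $Y\times_X X'$, and the induced proper surjective $f'\colon Y'\to X'$; by definition of $\kappa_\sigma$ on singular varieties it suffices to prove the equalities for $f'$, and the exceptional-divisor statement reduces similarly by pushing $E$ to an exceptional divisor on a common resolution. The statement for the ordinary Kodaira dimension is classical (e.g.\ \cite{Nak04} or \cite{KM98}): $\kappa(X,D)=\kappa(Y,f^*D)$ because $H^0(X,\lfloor mD\rfloor)\hookrightarrow H^0(Y,\lfloor mf^*D\rfloor)$ is injective, and equality of the two sides follows since sections upstairs are constant on fibres of $f$ over the generic point, hence descend; and adding an effective $f$-exceptional $E$ does not change $H^0$ because $f_*\OO_Y(\lfloor mf^*D\rfloor+mE)=\OO_X(\lfloor mD\rfloor)$ when $E$ is exceptional. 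So the content is the assertions about $\kappa_\sigma$.

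For $\kappa_\sigma$ I would argue along the same lines but replacing $D$ by $D+\frac1m A$ to control the ample twists. Fix an ample $A$ on $X$. By definition $\kappa_\sigma(Y,f^*D)=\kappa_\sigma(X,D)$ will follow once I show: (i) $\kappa_\sigma(Y,f^*D)\ge\kappa_\sigma(X,D)$, which is immediate from the injection $H^0(X,\lfloor mD\rfloor+A)\hookrightarrow H^0(Y,\lfloor mf^*D\rfloor+f^*A)$ together with the fact that $f^*A$ is big and nef, so it can be bounded below in the definition of $\sigma$ by an honest ample divisor on $Y$ up to finitely many sections; and (ii) the reverse inequality. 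For (ii), given an ample $A'$ on $Y$, choose an ample $A$ on $X$ with $f^*A-A'$ effective (possible after replacing $A$ by a high multiple, since $f^*A$ is big), so that $H^0(Y,\lfloor mf^*D\rfloor+A')\hookrightarrow H^0(Y,\lfloor mf^*D\rfloor+f^*A)$; then I use that $H^0(Y,\lfloor mf^*D\rfloor+f^*A)=H^0(Y,f^*(\lfloor mD\rfloor+A))$ up to rounding errors bounded independently of $m$, and the projection formula $f_*\OO_Y(f^*M)=\OO_X(M)$ (valid since $f_*\OO_Y=\OO_X$ when $f$ is proper surjective with connected fibres — and I can always factor through the Stein factorisation, the finite part only changing $h^0$ by a bounded multiplicative constant) to get $h^0(Y,f^*(\lfloor mD\rfloor+A))=h^0(X,\lfloor mD\rfloor+A)$. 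Comparing growth rates in $m$ then yields $\sigma(f^*D,A')\le\kappa_\sigma(X,D)$, hence (ii). The exceptional-divisor statement follows by the same mechanism: $f_*\OO_Y(\lfloor mf^*D\rfloor+mE+f^*A)=\OO_X(\lfloor mD\rfloor+A)$ for $E$ effective $f$-exceptional, so the $h^0$'s agree and $\kappa_\sigma(Y,f^*D+E)=\kappa_\sigma(X,D)$; combined with monotonicity $\kappa_\sigma(Y,f^*D)\le\kappa_\sigma(Y,f^*D+E)$ this pins it down.

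The main obstacle is bookkeeping the rounding: $\lfloor mf^*D\rfloor$ is not literally $f^*\lfloor mD\rfloor$, and $f^*D$ need not even have integer coefficients after pullback in a way compatible with $\lfloor\cdot\rfloor$. The standard fix is to absorb the discrepancy $\lfloor mf^*D\rfloor-f^*\lfloor mD\rfloor$, whose negative part is bounded by a fixed divisor independent of $m$, into the ample twist $A$ (enlarging $A$ once), and to note that this bounded perturbation cannot change the $\liminf$ of $h^0(-)/m^k$ being positive or zero — this is exactly the reason the definition of $\kappa_\sigma$ is robust. Modulo this, everything reduces to the projection formula and the classical Kodaira-dimension statement, so I do not anticipate any genuinely new difficulty; the care needed is entirely in making the ``bounded error'' arguments precise, for which I would cite the corresponding lemmas in \cite{Nak04}.
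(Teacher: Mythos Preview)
Your proposal is essentially correct, but it takes a different route from the paper for the one equality that is not simply a citation. The paper's proof of the first three relations is nothing more than a reference to \cite[Lemma II.3.11, Proposition V.2.7(4)]{Nak04}, which is exactly what you end up invoking once the rounding bookkeeping gets tedious. The substantive difference is in the fourth equality, $\kappa_\sigma(Y,f^*D+E)=\kappa_\sigma(X,D)$ for $f$ birational and $E$ effective exceptional: the paper does \emph{not} argue via the projection formula and $h^0$-estimates as you do, but instead appeals to the divisorial Zariski decomposition, using that $P_\sigma(f^*D+E)=P_\sigma(f^*D)$ (from \cite[Lemma 2.16]{GL13}) together with the fact that $\kappa_\sigma$ depends only on the positive part $P_\sigma$ (from \cite[Theorem 6.7]{Leh13}). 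Your argument is more elementary and self-contained --- it avoids importing two external results on $P_\sigma$ --- at the cost of the rounding gymnastics you flag; the paper's is a one-line reduction once those results are in hand, and it conceptually explains \emph{why} the exceptional part is invisible to $\kappa_\sigma$.

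One small caveat in your sketch: in step~(ii) for the general proper surjective case you write ``since $f^*A$ is big'', but this fails when $\dim Y>\dim X$, which the statement allows. Your projection-formula strategy still goes through in that generality (pushing forward and controlling $f_*\OO_Y(A')$ as a coherent sheaf on $X$), and in any case Nakayama's Proposition V.2.7 handles it; but the specific reduction ``choose $A$ with $f^*A-A'$ effective'' only works verbatim in the generically finite case. Since the fourth equality is the birational case, this does not affect the part where your approach genuinely diverges from the paper's.
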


\begin{proof}
The first three relations are \cite[Lemma II.3.11, Proposition V.2.7(4)]{Nak04}. For the last one, we have $P_\sigma(f^*D+E)=P_\sigma(f^*D)$ by \cite[Lemma 2.16]{GL13}, hence $\kappa_\sigma(Y,f^*D+E)=\kappa_\sigma(Y,f^*D)$ by \cite[Theorem 6.7]{Leh13}.
\end{proof}

The following result generalises \cite[Theorem 6.1]{Kaw85}, and it will be crucial in the proofs in the following section.

\begin{lem}\label{lem:Kappa=KappaSigma}
Let $(X,\Delta)$ be a klt pair. Then $(X,\Delta)$ has a good model if and only if $\kappa(X,K_X+\Delta)=\kappa_\sigma(X,K_X+\Delta)$.
\end{lem}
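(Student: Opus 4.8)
The statement is an equivalence, and one direction is essentially formal. If $(X,\Delta)$ has a good model $f\colon X\dashrightarrow Y$, then $K_Y+\Delta_Y$ is semiample, so for semiample divisors it is classical that $\kappa=\kappa_\sigma$; since $\kappa$ and $\kappa_\sigma$ are both birational invariants that are preserved under the small modification $f$ together with the exceptional divisor appearing in the definition of a log terminal model (Lemma~\ref{lem:properpullbacks}), one gets $\kappa(X,K_X+\Delta)=\kappa(Y,K_Y+\Delta_Y)=\kappa_\sigma(Y,K_Y+\Delta_Y)=\kappa_\sigma(X,K_X+\Delta)$. So the content is the converse: assuming $\kappa(X,K_X+\Delta)=\kappa_\sigma(X,K_X+\Delta)$, produce a good model.

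First I would dispose of the case $\kappa(X,K_X+\Delta)=-\infty$: then $\kappa_\sigma=-\infty$ as well, which forces $K_X+\Delta$ not to be pseudoeffective, and in that case the MMP with scaling (which one may run by \cite{BCHM} for klt pairs, or invoke the cited machinery) terminates with a Mori fibre space, i.e.\ a log terminal model does not apply but the ``good model'' convention is vacuous/degenerate here; more cleanly, the interesting range is $\kappa\ge0$. So assume $\kappa(X,K_X+\Delta)=\kappa_\sigma(X,K_X+\Delta)\ge0$. The strategy is to run a $(K_X+\Delta)$-MMP with scaling of an ample divisor. By Theorem~\ref{thm:birkar}—wait, that requires lower-dimensional good models; if we are allowed to assume that, then $(X,\Delta)$ has a log terminal model $(Y,\Delta_Y)$, so I may replace $(X,\Delta)$ by $(Y,\Delta_Y)$ and assume $K_X+\Delta$ is nef. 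But if we want a statement independent of dimension, the cleaner route is: the nef case of this lemma is precisely \cite[Theorem 6.1]{Kaw85} generalised, i.e.\ a nef klt adjoint divisor with $\kappa=\kappa_\sigma$ is semiample—this is the deep input, essentially the base-point-free-type theorem for divisors with $\kappa=\nu$.

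So the heart of the argument is the \emph{nef} case: if $K_X+\Delta$ is nef and $\kappa(X,K_X+\Delta)=\kappa_\sigma(X,K_X+\Delta)$, then $K_X+\Delta$ is semiample. I would reduce to this by running the MMP with scaling to reach a minimal model (log terminal model), using that $\kappa$ and $\kappa_\sigma$ are preserved under the steps of such an MMP (each step is $(K_X+\Delta)$-negative, hence these invariants don't change by Lemma~\ref{lem:properpullbacks} applied to the graph, since the exceptional divisors get absorbed). Once $K_X+\Delta$ is nef, one can pass to a log resolution and, on the smooth model, $\kappa_\sigma$ for a nef divisor equals the numerical dimension $\nu$ in the sense of Kawamata/Nakayama; the equality $\kappa=\nu$ for a nef $\Q$-divisor of the form $K_X+\Delta$ on a klt pair is exactly the hypothesis of the abundance-type statement proved in \cite{Kaw85} (for the canonical divisor) and its klt generalisation. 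Invoking that result gives semiampleness, hence a good model.

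\textbf{The main obstacle.} The genuinely hard step is the nef case—passing from the numerical equality $\kappa=\kappa_\sigma$ to actual semiampleness—which is a non-trivial theorem (a generalised basepoint-free theorem of Kawamata–Nakayama type) rather than a formal manipulation; everything else is bookkeeping with birational invariance of $\kappa$ and $\kappa_\sigma$ via Lemma~\ref{lem:properpullbacks} and the construction of minimal models via the MMP with scaling. I expect the authors to cite the nef case as known and spend their energy on the reduction, carefully tracking that running the MMP with scaling does not change either Kodaira dimension, so that a good model for the minimal model pulls back to a good model for $(X,\Delta)$ in the sense of the definition above.
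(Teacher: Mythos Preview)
The paper does not prove this lemma at all; it simply cites \cite[Theorem 4.3]{GL13}. So there is no in-house argument to compare your sketch against. That said, your sketch is not a complete proof, and the gap is exactly at the step you flag yourself without resolving.

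Your plan is: (a) the forward direction is formal; (b) for the converse, run a $(K_X+\Delta)$-MMP with scaling to reach a log terminal model, then invoke the nef abundance-type statement $\kappa=\nu\Rightarrow$ semiample. The problem is step (b): you have no mechanism to guarantee that the MMP with scaling \emph{terminates}. You try Theorem~\ref{thm:birkar}, correctly notice it assumes good models in lower dimensions (which is not a hypothesis of this lemma), and then simply assert ``I would reduce to this by running the MMP with scaling to reach a minimal model.'' But termination of the klt MMP with scaling for a pseudoeffective, non-big $K_X+\Delta$ is not known unconditionally; it is precisely one of the things one is trying to establish. So as written, your reduction to the nef case is circular.

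The actual argument in \cite{GL13} does not first produce a minimal model and then check semiampleness. Instead, the hypothesis $\kappa=\kappa_\sigma\geq0$ is used to control the Iitaka fibration: on a very general fibre $F$ one gets $\kappa_\sigma(F,(K_X+\Delta)|_F)=0$, and one combines Nakayama's characterisation of $\kappa_\sigma=0$ with Lai's criterion \cite{Lai11} (a klt pair has a good model if the restriction to a very general Iitaka fibre does) to conclude. In other words, the equality $\kappa=\kappa_\sigma$ is itself the input that forces the relevant MMP to terminate with a good model, rather than something one verifies after independently reaching a minimal model. Your outline captures the correct invariance bookkeeping (Lemma~\ref{lem:properpullbacks}) and the correct endgame in the nef case, but it is missing this key idea for getting there.
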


\begin{proof}
This is \cite[Theorem 4.3]{GL13}.
\end{proof}

\begin{lem}\label{lem:2}
Let $(X,\Delta)$ and $(X,\Delta')$ be pairs, and assume that there exist $\Q$-divisors $D\geq0$ and $D'\geq0$ such that 
$$K_X+\Delta\sim_\Q D\geq0, \quad K_X+\Delta'\sim_\Q D'\geq0\quad\text{and}\quad\Supp D'=\Supp D.$$
Then 
$$\kappa(X,K_X+\Delta)=\kappa(X',K_{X'}+\Delta')\quad\text{and}\quad\kappa_\sigma(X,K_X+\Delta)=\kappa_\sigma(X',K_{X'}+\Delta')$$
\end{lem}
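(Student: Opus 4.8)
The plan is to exploit the fact that $D$ and $D'$ are \emph{not} assumed $\Q$-linearly equivalent, only supported on the same divisor: two effective divisors with the same support bound one another up to a positive rational scalar, and both $\kappa$ and $\kappa_\sigma$ are monotone and scale-invariant, so they cannot distinguish $D$ from $D'$. (Note that the $X'$ in the displayed equalities should read $X$.)

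First I would reduce to a smooth variety. Since $K_X+\Delta$ and $K_X+\Delta'$ are $\Q$-Cartier, so are $D\sim_\Q K_X+\Delta$ and $D'\sim_\Q K_X+\Delta'$, and, being effective, they pull back to effective $\Q$-divisors under any birational morphism $g\colon W\to X$ with $W$ smooth. As $K_X+\Delta$ is effective, hence pseudoeffective, the definition of $\kappa_\sigma$ on possibly singular varieties gives $\kappa_\sigma(X,K_X+\Delta)=\kappa_\sigma(W,g^*(K_X+\Delta))$, and the birational invariance of $\kappa$, cf.\ Lemma \ref{lem:properpullbacks}, gives $\kappa(X,K_X+\Delta)=\kappa(W,g^*(K_X+\Delta))$; combining this with the invariance of $\kappa$ and $\kappa_\sigma$ under $\Q$-linear equivalence, and arguing identically for the primed pair, the claim reduces to the two equalities $\kappa(W,g^*D)=\kappa(W,g^*D')$ and $\kappa_\sigma(W,g^*D)=\kappa_\sigma(W,g^*D')$ on the smooth projective variety $W$.

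For this I would write $D=\sum a_iE_i$ and $D'=\sum b_iE_i$ over the common set of prime divisors of $\Supp D=\Supp D'$, all $a_i,b_i>0$; then with $c=\max_i b_i/a_i$ and $c'=\max_i a_i/b_i$ one has $D'\leq cD$ and $D\leq c'D'$ on $X$, hence $g^*D'\leq c\,g^*D$ and $g^*D\leq c'\,g^*D'$ on $W$, since the pullback of an effective $\Q$-Cartier divisor is effective. Now I would invoke two elementary properties of effective $\Q$-divisors on a smooth projective variety: monotonicity, $0\leq A\leq B\Rightarrow\kappa(W,A)\leq\kappa(W,B)$ and $\kappa_\sigma(W,A)\leq\kappa_\sigma(W,B)$, which is immediate from the sheaf inclusions $\OO_W(\lfloor mA\rfloor+L)\subseteq\OO_W(\lfloor mB\rfloor+L)$ for all $m$ and all Cartier $L$; and scale-invariance, $\kappa(W,\lambda A)=\kappa(W,A)$ and $\kappa_\sigma(W,\lambda A)=\kappa_\sigma(W,A)$ for rational $\lambda>0$, standard for $\kappa$ and contained in \cite{Nak04} for $\kappa_\sigma$. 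Then $\kappa(W,g^*D')\leq\kappa(W,c\,g^*D)=\kappa(W,g^*D)$ and symmetrically $\kappa(W,g^*D)\leq\kappa(W,g^*D')$, so the two are equal, and the identical chain of inequalities works verbatim for $\kappa_\sigma$.

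I do not expect a genuine obstacle here; the only step requiring attention is the bookkeeping in the reduction to $W$ — checking that $\kappa$ and $\kappa_\sigma$ transform as stated under $g$ and depend only on the $\Q$-linear equivalence class of a $\Q$-Cartier divisor even when $X$ is not $\Q$-factorial — which is covered by \cite{Nak04} together with Lemma \ref{lem:properpullbacks}.
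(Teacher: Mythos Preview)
Your proof is correct and follows essentially the same approach as the paper: bound $D'$ between two positive rational multiples of $D$ using the equality of supports, then invoke monotonicity and scale-invariance of $\kappa$ and $\kappa_\sigma$. The paper's version is terser and argues directly on $X$, whereas you add the (harmless, arguably more careful) reduction to a smooth model $W$; you also correctly flag the typo $X'\to X$ in the displayed equalities.
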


\begin{proof}
There exist positive rational numbers $t_1$ and $t_2$ such that $t_1D\leq D'\leq t_2D$, hence $\kappa(X,t_1D)\leq\kappa(X,D')\leq\kappa(X,t_2D)$. This implies the first equality, and the second is analogous.
\end{proof}

\subsection{Valuations under finite morphisms}

\begin{dfn}
A geometric valuation $\Gamma$ on a normal variety $X$ is a valuation on the function field $k(X)$ given by the order of vanishing at the generic point of a prime divisor on some proper birational model $f\colon Y\to X$; by abusing notation, we identify $\Gamma$ with the corresponding prime divisor. If $D$ is an $\R$-Cartier divisor on $X$, we use $\mult_\Gamma D$ to denote $\mult_\Gamma f ^*D$. The set $f(\Gamma)$ is the \emph{centre of $\Gamma$ on $X$} and is denoted by $c_X(\Gamma)$.
\end{dfn}

\begin{rem}\label{rem:1}
With notation from the definition, let $R$ be a discrete valuation ring with quotient field $k(X)$ which dominates the local ring $\OO_{X,c_X(\Gamma)} \subseteq k(X)$. Then there exists a morphism $\Spec R\to X$ which sends the generic point of $\Spec R$ to the generic point of $X$, and the closed point of $\Spec R$ to the generic point of $c_X(\Gamma)$, cf.\ \cite[Lemma II.4.4]{Har77}. In particular, this holds if $R=\OO_{Y,\Gamma}$. 
\end{rem}

\begin{rem}\label{rem:2}
Let $X$ be a normal variety and let $(R,m)$ be a discrete valuation ring such that the quotient field of $R$ is $k(X)$. Assume that there is a morphism $\Spec R\to X$ which sends the generic point of $\Spec R$ to the generic point of $X$. Assume that $\trdeg_\C(R/m)=\dim X-1$. Then by a lemma of Zariski \cite[Lemma 2.45]{KM98}, the corresponding valuation is a geometric valuation on $X$.
\end{rem}

We first prove an easy algebraic result that we use in the proof of Proposition \ref{pro:DicrepFinite}. 

\begin{lem}\label{lem:GeomValuation}
Let $k\subseteq K$ be an algebraic extension of fields. Let $(B,m_B)$ be a discrete valuation ring with the quotient field $K$, and let $A=B\cap k$ and $m_A=m_B\cap k$. Then $(A,m_A)$ is a discrete valuation ring with the quotient field $k$ such that the field extension $A/m_A\subseteq B/m_B$ is algebraic.
\end{lem}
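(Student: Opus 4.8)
The statement is the standard fact that restricting a discrete valuation to a subfield along an algebraic extension yields a discrete valuation. First I would verify that $A$ is a valuation ring of $k$: given $x \in k^\times$, since $B$ is a valuation ring of $K$ we have $x \in B$ or $x^{-1} \in B$, and since $x, x^{-1} \in k$ this immediately gives $x \in A$ or $x^{-1} \in A$. A valuation ring is local with maximal ideal consisting of the non-units, and one checks directly that this maximal ideal is exactly $m_B \cap k = m_A$, so $(A, m_A)$ is a local ring with quotient field $k$. The inclusion $A \hookrightarrow B$ induces an injection of residue fields $A/m_A \hookrightarrow B/m_B$, which is the map in question.

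\textbf{Discreteness.} The key point is that the value group of $A$ is a subgroup of the value group of $B$, which is $\Z$ (or a subgroup thereof, but $B$ being a DVR it is $\Z$). A subgroup of $\Z$ is either trivial or isomorphic to $\Z$; it is nontrivial because $A \neq k$ — indeed, if $t \in m_B$ is a uniformizer of $B$, then since $K/k$ is algebraic, $t$ satisfies a polynomial relation $a_d t^d + \dots + a_1 t + a_0 = 0$ with $a_i \in k$ not all zero, and examining $B$-valuations of the terms shows that some $a_i$ with $i \geq 1$ must lie in $m_B$ (otherwise the lowest-valuation term would be unique and nonzero), hence in $m_A$, so $m_A \neq 0$ and $A$ is not a field. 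Thus $A$ is a discrete valuation ring.

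\textbf{Algebraicity of the residue extension.} It remains to show $A/m_A \subseteq B/m_B$ is algebraic. Take $\bar b \in B/m_B$ with lift $b \in B$. Since $K/k$ is algebraic, $b$ satisfies $b^e + c_{e-1}b^{e-1} + \dots + c_0 = 0$ with $c_i \in k$; I may assume this relation is chosen with all $c_i \in A$ after clearing denominators using the valuation ring property (multiply through by a suitable power of a uniformizer, or more cleanly: among the $c_i$ and $1$, pick the one of minimal $B$-valuation and divide — but since the leading coefficient is $1$ and has valuation $0$, in fact all $c_i$ with negative valuation would have to be cancelled, so actually one argues that the minimal polynomial already has coefficients in $A$ because $b \in B$ is integral-like over $A$; alternatively invoke that $B$ is integral over $A$ since $A$ is a valuation ring of $k$ and $B$ lies over it in the algebraic extension). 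Reducing mod $m_B$ gives $\bar b^e + \bar c_{e-1}\bar b^{e-1} + \dots + \bar c_0 = 0$ in $B/m_B$ with $\bar c_i \in A/m_A$, so $\bar b$ is algebraic over $A/m_A$.

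\textbf{Main obstacle.} The only genuinely delicate point is ensuring that the algebraic relation for an element of $B$ (or its residue) can be taken with coefficients in $A$ rather than merely in $k$; the clean way to handle this is to note that $B$ is integral over $A$ — a valuation ring $A$ of $k$ is integrally closed in $k$, and any element of $B \subseteq K$, being algebraic over $k$, is a root of a monic polynomial over $k$ whose coefficients, by the standard argument with valuations (Gauss-lemma style, using that $B \cap k = A$), in fact lie in $A$. Granting this, both the nontriviality of $m_A$ and the algebraicity of the residue field extension follow by reduction mod $m_B$.
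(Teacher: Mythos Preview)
Your overall structure is sound, and the paper follows the same outline: show $A$ is a valuation ring of $k$, show $m_A\neq 0$, and show the residue extension is algebraic. However, your handling of the algebraicity step contains a genuine error.

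You assert, both in the ``Main obstacle'' paragraph and implicitly earlier, that $B$ is integral over $A$, equivalently that every $b\in B$ satisfies a \emph{monic} polynomial with coefficients in $A$. This is false. Take $k=\Q$, $K=\Q(i)$, and let $B$ be the localisation of $\Z[i]$ at the prime $(2+i)$, so that $A=B\cap\Q=\Z_{(5)}$. The element $b=1/(2-i)$ lies in $B$ since $\nu(2-i)=0$, but its monic minimal polynomial over $\Q$ is $T^2-\tfrac{4}{5}T+\tfrac{1}{5}$, whose coefficients are not in $\Z_{(5)}$; a Gauss-lemma argument then shows that no monic polynomial in $A[T]$ vanishes on $b$. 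The underlying reason is that $B$ sees only one of the two primes of $\Z[i]$ above $5$, so Galois conjugates of elements of $B$ need not lie in $B$, and the ``Gauss-lemma style'' argument you allude to cannot get off the ground.

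You actually brush against the correct fix when you write ``among the $c_i$ and $1$, pick the one of minimal $B$-valuation and divide,'' but then talk yourself out of it. That is precisely what the paper does: given $p(T)=T^n+r_{n-1}T^{n-1}+\cdots+r_0\in k[T]$ with $p(b)=0$, choose $j$ with $\nu(r_j)$ minimal. If $\nu(r_j)\geq 0$ then already $p\in A[T]$; otherwise $r_j^{-1}p\in A[T]$ and its reduction modulo $m_A$ is nonzero because its $T^j$-coefficient is $1$. Either way $\bar b$ is algebraic over $A/m_A$. Monicity is irrelevant --- one only needs the reduction to be nonzero.

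The same normalisation also streamlines your discreteness argument (where the ``$i\geq 1$'' is a slip --- it is the constant term that lands in $m_A$): once one has a relation $\sum a_i b^i=0$ with $a_i\in A$ of minimal degree and $\nu(b)>0$, the paper observes that $a_0\neq 0$ and $\nu(a_0)=\nu\bigl(b\sum_{i\geq 1}a_ib^{i-1}\bigr)\geq\nu(b)>0$, whence $a_0\in m_A\setminus\{0\}$.
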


\begin{proof}
Let $\nu\colon K\to\Z\cup\{\infty\}$ be the valuation function corresponding to $(B,m_B)$. Then $A=\{a\in k\mid \nu(a)\geq0\}$ and $m_A=\{a\in k\mid \nu(a)>0\}$, and it is immediate that $k$ is the quotient field of $A$. Let $b \in B$ and denote $\overline b=b+m_B\in B/m_B$. Then there is a polynomial
\[
p= T^n + r_{n-1} T^{n-1} + \cdots + r_0\in k[T]
\]  
such that $p(b)=0$, and fix $j \in \{0, \ldots, n-1\}$ such that $\nu(r_j) \leq \nu(r_i)$ for all $i$. If $\nu(r_j) \geq 0$, then $p\in A[T]$ and $\overline b$ is algebraic over $A/m_A$. If $\nu(r_j) < 0$, then $r_j^{-1} \in m_A$ and $\nu(r_j^{-1}r_i) \geq 0$ for all $i$. Therefore, 
$$\overline p=r_j^{-1}p\mod m_A\in (A / m_A)[T]$$ 
is a non-zero polynomial such that $\overline p(\overline b)=0$, which proves the last claim. It remains to show that $m_A\neq\{0\}$. Fix $b\in B$ with $\nu(b)>0$ and let 
\[
p= a_nT^n + a_{n-1} T^{n-1} + \cdots + a_0\in A[T]
\]  
be a polynomial of minimal degree such that $p(b)=0$, so that, in particular, $a_0\neq0$. Then we have 
\[
0<\nu(b)\leq\nu\big(b(a_nb^{n-1} + a_{n-1} b^{n-2} + \cdots +a_1)\big) = \nu(-a_0),
\]  
hence $a_0 \in m_A$.
\end{proof}

\begin{pro}\label{pro:DicrepFinite}
Let $\pi\colon X'\to X$ be a finite morphism of degree $m$ between normal varieties, let $\Delta$ be a $\Q$-divisor on $X$ such that $(X,\Delta)$ is a pair, and let $\Delta'$ be a $\Q$-divisor on $X'$ such that $K_{X'}+\Delta'=\pi^*(K_X+\Delta)$. 
\begin{enumerate}
\item[(i)] For every geometric valuation $E'$ over $X'$ there exists a geometric valuation $E$ over $X$ and an integer $1\leq r\leq m$ such that $\pi(c_{X'}(E'))=c_X(E)$ and 
$$a(E',X',\Delta')+1=r(a(E,X,\Delta)+1).$$
\item[(ii)] For every geometric valuation $E$ over $X$ there exists a geometric valuation $E'$ over $X'$ and an integer $1\leq r\leq m$ such that $\pi(c_{X'}(E'))=c_X(E)$ and 
$$a(E',X',\Delta')+1=r(a(E,X,\Delta)+1).$$
\end{enumerate}
In particular, the pair $(X,\Delta)$ is log canonical (respectively klt) if and only if the pair $(X',\Delta')$ is log canonical (respectively klt).
\end{pro}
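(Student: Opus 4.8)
The plan is to compute the discrepancies by reducing to a purely local ramification statement at codimension-one points, after finding a single birational model on which both valuations are realised by prime divisors related by a finite morphism. Throughout recall that $\pi$ finite surjective gives $\dim X'=\dim X$.

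I would first treat part (ii). Given a geometric valuation $E$ over $X$ realised on a proper birational model $f\colon Y\to X$ (so $\OO_{Y,E}$ is the corresponding DVR, with generic point $\eta$), let $Y'$ be the normalisation of $Y$ in the field $k(X')$ via $k(Y)=k(X)\hookrightarrow k(X')$. Then $\psi\colon Y'\to Y$ is finite, $Y'$ is a normal projective variety with $k(Y')=k(X')$, and by the universal property of normalisation the composite $Y'\to Y\to X$ factors through a proper birational morphism $g\colon Y'\to X'$, with the resulting square commutative. Since $\psi$ is finite and surjective, some codimension-one point $\eta'$ of $Y'$ lies over $\eta$; as $Y'$ is normal, $\OO_{Y',\eta'}$ is a DVR, so $E':=\overline{\{\eta'\}}$ is a geometric valuation over $X'$, and chasing the commutative square gives $\pi(c_{X'}(E'))=c_X(E)$. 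For part (i), given $E'$ over $X'$ with valuation ring $B=\OO_{Y',E'}$, I would instead set $A:=B\cap k(X)$; by Lemma \ref{lem:GeomValuation} this is a DVR of $k(X)$ with $A/m_A\subseteq B/m_B$ algebraic, hence $\trdeg_\C(A/m_A)=\trdeg_\C(B/m_B)=\dim X-1$, and since $\pi$ is a morphism one checks that $A$ dominates the local ring of $X$ at $\pi(c_{X'}(E'))$. Remark \ref{rem:2} then produces a geometric valuation $E$ over $X$ with $A=\OO_{Y,E}$ on a suitable model $f\colon Y\to X$ and $c_X(E)=\pi(c_{X'}(E'))$ (the centre being the unique point dominated by $A$). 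Replacing $Y'$ by the normalisation of this $Y$ in $k(X')$, one sees that $B=\OO_{Y',\eta'}$ for a codimension-one point $\eta'$ over $\eta$, so part (i) reduces to exactly the configuration obtained in part (ii).

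Now the ramification computation. On $Y$ write $K_Y+\Delta_Y=f^*(K_X+\Delta)$, so $\mult_E\Delta_Y=-a(E,X,\Delta)$, and on $Y'$ write $K_{Y'}+\Delta_{Y'}=g^*(K_{X'}+\Delta')$, so $\mult_{E'}\Delta_{Y'}=-a(E',X',\Delta')$. Using the hypothesis $K_{X'}+\Delta'=\pi^*(K_X+\Delta)$ and the relative canonical formula $K_{Y'}=\psi^*K_Y+R_\psi$ for the finite morphism $\psi$ of normal varieties, I get $g^*(K_{X'}+\Delta')=(\pi g)^*(K_X+\Delta)=(f\psi)^*(K_X+\Delta)=\psi^*(K_Y+\Delta_Y)$, whence
$$\Delta_{Y'}=\psi^*\Delta_Y-R_\psi.$$
Let $e\geq1$ be the ramification index of the extension of DVRs $\OO_{Y,\eta}\subseteq\OO_{Y',\eta'}$; then $\mult_{E'}(\psi^*\Delta_Y)=e\cdot\mult_E\Delta_Y$, while in characteristic zero the ramification is tame, so $\mult_{E'}R_\psi=e-1$. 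Taking $\mult_{E'}$ of the displayed identity gives $-a(E',X',\Delta')=-e\cdot a(E,X,\Delta)-(e-1)$, i.e.
$$a(E',X',\Delta')+1=e\bigl(a(E,X,\Delta)+1\bigr).$$
Since $e\leq[k(X'):k(X)]=m$ by the fundamental inequality $\sum e_if_i\leq m$ for this extension, the integer $r:=e$ satisfies $1\leq r\leq m$, proving both (i) and (ii). The last assertion is then immediate: for matched valuations $a(E,X,\Delta)+1\geq0$ (resp.\ $>0$) if and only if $a(E',X',\Delta')+1\geq0$ (resp.\ $>0$) because $r>0$, and by (i) every geometric valuation over $X'$ is matched to one over $X$ and conversely by (ii), so $(X,\Delta)$ is log canonical (resp.\ klt) if and only if $(X',\Delta')$ is.

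The main obstacle is the bookkeeping in passing to the common model: one must verify that on the normalisation $Y'$ of $Y$ in $k(X')$ the chosen valuation really appears as a prime divisor — in part (i) this is the point that a DVR dominating a DVR with the same fraction field must equal it, so $B=\OO_{Y',\eta'}$ — and that the ramification index $e$ governing $\mult_{E'}\psi^*$ is the same one governing the different. Granting the standard fact that tame (in particular characteristic-zero) ramification contributes $\mult_{E'}R_\psi=e-1$, the rest is routine.
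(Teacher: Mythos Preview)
Your proof is correct and follows essentially the same strategy as the paper's: build a commutative square with $\psi\colon Y'\to Y$ finite and $f,g$ birational on which both valuations are realised by prime divisors, then read off the discrepancy relation from the Hurwitz formula at the generic point of $E'$. The only cosmetic differences are that the paper takes $Y'$ as a component of the normalisation of $X'\times_X Y$ rather than the normalisation of $Y$ in $k(X')$ (these coincide), and in part (i) the paper invokes the valuative criterion of properness explicitly to place $E'$ on $Y'$, whereas you compress this into the observation that $B$ must equal $\OO_{Y',\eta'}$ once you know $\psi(\eta')=\eta$.
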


\begin{proof}
This is \cite[Proposition 5.20]{KM98}, and in the following we reproduce the proof with more details.

We claim that both in (i) and (ii) there is a commutative diagram
\begin{equation}\label{dia1}
\begin{gathered} 
\xymatrix{ Y^{\prime} \ar[r]^{\pi'} \ar[d]_{f^{\prime}} & Y \ar[d]^f \\
            X^{\prime} \ar[r]^{\pi} & X
 }
\end{gathered}
\end{equation}
where $f$ and $f'$ are birational morphisms, $\pi'$ is finite and there are prime divisors $E \subseteq Y$ and $E^{\prime} \subseteq Y^{\prime}$ such that $\pi'(E^{\prime}) = E$. The claim immediately implies the proposition: indeed, let $r = \mult_{E^{\prime}}(\pi')^*E$. Then locally around the generic point of $E^{\prime}$ we have
\begin{align*}
K_{Y^{\prime}} - (r-1) E ^ {\prime} &= (\pi')^* K_{Y} \sim_\Q (\pi')^*(f^*(K_X + \Delta) + a (E,X,\Delta)\cdot E) \\ 
&= (f')^*(K_X^{\prime} + \Delta^{\prime}) + r\cdot a (E,X,\Delta)\cdot E^{\prime}\\
& \sim_\Q K_{Y^{\prime}} - a (E',X',\Delta')\cdot E^{\prime} + r\cdot a (E,X,\Delta)\cdot E^{\prime},
\end{align*}
hence (i) and (ii) follow.

To see the claim in the case (ii), let $f \colon Y \to X$ be a birational morphism such that $E \subseteq Y$ is a prime divisor, and let $Y'$ be a component of the normalisation of the fibre product $X^{\prime} \times_X Y$ that maps onto $Y$. Then we obtain the diagram \eqref{dia1}, and since $\pi'$ is surjective, there is a prime divisor $E^{\prime} \subseteq Y^{\prime}$ with $\pi'(E^{\prime}) = E$.  

In the case (i), let $(R',m_{R'})$ be the discrete valuation ring corresponding to the valuation $E'$, and let $R=R'\cap k(X)$ and $m_R=m_{R'}\cap k(X)$. Since $k(X)\subseteq k(X')$ is an algebraic extension of fields, $R$ is a discrete valuation ring with quotient field $k(X)$ such that $\trdeg_\C(R/m_R)=\dim X-1$ by Lemma \ref{lem:GeomValuation}. If $E$ is the corresponding discrete valuation, then $E$ is a divisorial valuation by Remark \ref{rem:2}. By Remark \ref{rem:1}, there is a morphism $\rho'\colon\Spec R'\to X'$ which sends the generic point of $\Spec R'$ to the generic point of $X'$, and the closed point of $\Spec R'$ to the generic point $\eta'$ of $c_{X'}(E')$. If $\eta=\pi(\eta')$, then 
$$\OO_{X,\eta}\subseteq\OO_{X',\eta'}\cap k(X)\subseteq R'\cap k(X)=R,$$
hence by Remark \ref{rem:1} there is a morphism $\rho\colon\Spec R\to X$ which sends the generic point of $\Spec R$ to the generic point of $X$, and the closed point of $\Spec R$ to $\eta$. 

Let $f \colon Y \to X$ be a birational morphism such that $E$ is a divisor on $Y$, and denote by $X^{\prime}$ a component of the normalization of the fibre product $X^{\prime} \times_X Y$ that maps onto $Y$, so that we have the diagram \eqref{dia1}. By the valuative criterion of properness, we have the diagram
\[
\xymatrix{ & Y^{\prime} \ar[r]^{\pi'} \ar[d]_{f^{\prime}} & Y \ar[d]^f & \\
\Spec R' \ar[r]^{\quad\rho'} \ar@/_1pc/[rrr]^\iota \ar[ru]^{\theta'}  & X^{\prime} \ar[r]^{\pi} & X & \Spec R \ar[lu]_\theta \ar[l]_{\rho\ \ }
          }
\]
where $\iota\colon\Spec R'\to\Spec R$ is the morphism induced by the inclusion $R\subseteq R'$. Since $f$ is separated, we have $\pi'\circ\theta'=\theta\circ\iota$, and this just says that $E^{\prime}$ is a prime divisor on $Y^{\prime}$ such that $\pi'(c_{Y'}(E^{\prime})) = c_Y(E)$.
\end{proof}

\section{Good models on uniruled pairs}\label{sec:3}

\begin{lem}\label{lem:fibre}
Let $(X,\Delta)$ be a pair, and let $f\colon X\dashrightarrow Y$ be a birational contraction to a normal projective variety such that $K_Y + f_* \Delta$ is $\Q$-Cartier. Then 
$$\kappa_\sigma(X,K_X+\Delta)\leq\kappa_\sigma(Y,K_Y+f_*\Delta).$$ 
\end{lem}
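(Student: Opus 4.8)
The plan is to reduce to a comparison of volumes (or of the sectional functions $\sigma(\,\cdot\,,A)$) of $K_X+\Delta$ and $K_Y+f_*\Delta$ by choosing a common resolution, and then to exploit that $f$ is a birational \emph{contraction}, so that no divisor extracted on the resolution lies in the support of the exceptional part over $Y$ while it may over $X$. First I would take a smooth projective variety $W$ together with birational morphisms $p\colon W\to X$ and $q\colon W\to Y$ resolving the map $f$; since $X$ need not be smooth or $\Q$-factorial I should be a little careful, but $K_X+\Delta$ is $\Q$-Cartier by hypothesis, so $p^*(K_X+\Delta)$ makes sense, and similarly $q^*(K_Y+f_*\Delta)$ makes sense. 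Write $K_W = p^*(K_X+\Delta) + \sum a_i E_i - p_*^{-1}\Delta$ in the usual way, and likewise over $Y$.

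The key point is the following divisorial inequality on $W$: because $f$ is a birational contraction, every $p$-exceptional prime divisor whose centre on $Y$ is a divisor is not $q$-exceptional and is in fact a component of $q_*^{-1}(f_*\Delta)$ if it meets the support of $\Delta$, whereas the remaining $p$-exceptional divisors are $q$-exceptional. Concretely I would argue that there is an effective $q$-exceptional $\Q$-divisor $F$ on $W$ with
\[
p^*(K_X+\Delta) \le q^*(K_Y+f_*\Delta) + F .
\]
To see this one compares discrepancies: for a prime divisor $\Gamma$ on $W$ that is $q$-exceptional, $\mathrm{mult}_\Gamma p^*(K_X+\Delta) \le \mathrm{mult}_\Gamma\big(q^*(K_Y+f_*\Delta)+F\big)$ can be read off from $a(\Gamma,X,\Delta) \ge a(\Gamma,Y,f_*\Delta)$, which holds because $f$ being a contraction does not extract divisors, so passing from $Y$ to $X$ (equivalently, blowing up along $f$) only lowers discrepancies; for a prime divisor $\Gamma$ that is not $q$-exceptional one checks that both sides agree as divisors pushed to $Y$, using that $f_*\Delta$ is the strict transform of $\Delta$ plus nothing, i.e. $\mathrm{mult}_\Gamma p^*(K_X+\Delta) = \mathrm{mult}_\Gamma q^*(K_Y+f_*\Delta)$ there.

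Granting the displayed inequality, the conclusion is immediate from monotonicity of $\kappa_\sigma$ under adding effective divisors and under $q$-exceptional modifications. Indeed
\[
\kappa_\sigma(X,K_X+\Delta) = \kappa_\sigma\big(W, p^*(K_X+\Delta)\big) \le \kappa_\sigma\big(W, q^*(K_Y+f_*\Delta)+F\big) = \kappa_\sigma(Y,K_Y+f_*\Delta),
\]
where the first equality is Lemma \ref{lem:properpullbacks}, the middle inequality is the monotonicity of $\sigma(\,\cdot\,,A)$ in the divisor (if $D \le D'$ then $h^0(W,\lfloor mD\rfloor + A) \le h^0(W,\lfloor mD'\rfloor+A)$, so $\sigma(D,A)\le\sigma(D',A)$, hence $\kappa_\sigma(W,D)\le\kappa_\sigma(W,D')$), and the last equality is again Lemma \ref{lem:properpullbacks} for the birational morphism $q$ together with the effective $q$-exceptional divisor $F$.

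The main obstacle I expect is establishing the divisorial inequality cleanly without assuming $X$ is $\Q$-factorial or that $(X,\Delta)$ has good singularities: one has to make precise that "$f$ is a birational contraction'' forces $\mathrm{Exc}(q)$-part of $p^*(K_X+\Delta)$ to dominate $p^*(K_X+\Delta)-q^*(K_Y+f_*\Delta)$ from above, and the sign of the non-exceptional contribution must be controlled. This is a standard negativity-of-contraction / discrepancy bookkeeping argument, but it is the only place where the hypothesis that $f$ is a contraction (and not merely birational) is used, and it is where care is needed.
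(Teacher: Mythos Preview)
Your approach is essentially the paper's: resolve $f$ by $(p,q)\colon W\to X\times Y$, observe that the difference $p^*(K_X+\Delta)-q^*(K_Y+f_*\Delta)$ is $q$-exceptional because $f$ is a contraction, and then conclude via Lemma~\ref{lem:properpullbacks} and monotonicity of $\kappa_\sigma$. The paper packages this slightly differently by writing $K_W+\Delta_W\sim_\Q p^*(K_X+\Delta)+E$ and $K_W+\Delta_W'\sim_\Q q^*(K_Y+f_*\Delta)+E'$ and noting that $\Delta_W-\Delta_W'$ is $q$-exceptional, but the content is the same.

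One correction: your claimed inequality $a(\Gamma,X,\Delta)\ge a(\Gamma,Y,f_*\Delta)$ for $q$-exceptional $\Gamma$ is false in general---for instance, if $f$ is the blow-down of a $(-1)$-curve $C$ on a smooth surface with $\Delta=0$, then $a(C,X,0)=0<1=a(C,Y,0)$---and your justification that ``passing from $Y$ to $X$ only lowers discrepancies'' has the direction wrong (for MMP steps discrepancies increase, and for arbitrary contractions there is no inequality). Fortunately this claim is also unnecessary: once you know the difference $p^*(K_X+\Delta)-q^*(K_Y+f_*\Delta)$ is supported on $q$-exceptional divisors (which follows solely from your correct observation about the non-$q$-exceptional components), write it as $F^+-F^-$ with $F^\pm\ge 0$ $q$-exceptional and take $F=F^+$. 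The rest of your argument then goes through as written.
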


\begin{proof}
Let $(p,q)\colon W\to X\times Y$ be a resolution of the map $f$. Write
$$K_W+\Delta_W\sim_\Q p^*(K_X+\Delta)+E\quad\text{and}\quad K_W+\Delta_W'\sim_\Q q^*(K_Y+f_*\Delta)+E',$$
where $\Delta_W\geq0$ and $E\geq0$ have no common components, and $\Delta_W'\geq0$ and $E'\geq0$ have no common components. Since $f$ is a contraction, the divisor $\Delta_W-\Delta_W'$ is $q$-exceptional, and there are effective $q$-exceptional $\Q$-divisors $E^+$ and $E^-$ such that $\Delta_W-\Delta_W'=E^+-E^-$. Therefore,
$$K_W+\Delta_W+E^-=K_W+\Delta_W'+E^+\sim_\Q q^*(K_Y+f_*\Delta)+E'+E^+,$$
hence $\kappa_\sigma(W,K_W+\Delta_W+E^-)=\kappa_\sigma(Y,K_Y+f_*\Delta)$ by Lemma \ref{lem:properpullbacks}. We conclude since $\kappa_\sigma(X,K_X+\Delta)=\kappa_\sigma(W,K_W+\Delta_W)\leq \kappa_\sigma(W,K_W+\Delta_W+E^-)$ by Lemma \ref{lem:properpullbacks}.
\end{proof}

\begin{dfn}
Let $(X,\Delta)$ be a klt pair. Let $G$ be an effective $\Q$-Cartier $\Q$-divisor such that $K_X+\Delta+G$ is pseudoeffective. Then the \emph{pseudoeffective threshold $\tau(X,\Delta;G)$} is defined as
$$\tau(X,\Delta;G)=\min\{t\in\R\mid K_X+\Delta+tG\text{ is pseudoeffective}\}.$$
\end{dfn}

\begin{thm}\label{thm:tau}
Assume the existence of good models for klt pairs in dimensions at most $n-1$. 

Let $(X,\Delta)$ be a klt pair of dimension $n$. Let $G$ be an effective $\Q$-Cartier $\Q$-divisor such that $(X,\Delta+G)$ is klt and $K_X+\Delta+G$ is pseudoeffective. Assume that $K_X+\Delta$ is not pseudoeffective, i.e.\ that $\tau=\tau(X,\Delta;G)>0$. 

Then $\tau\in\Q$, and there exists a good model of $(X,\Delta+\tau G)$. In particular, 
$$\kappa(X,K_X+\Delta+\tau G)\geq0.$$
\end{thm}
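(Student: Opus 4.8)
The plan is to run a Minimal Model Program with scaling to realise the pseudoeffective threshold geometrically, and then invoke the lower-dimensional hypothesis to get abundance on the fibres of the resulting Mori fibre space. First I would observe that $\tau\leq 1$ and that, since $K_X+\Delta+G$ is pseudoeffective while $K_X+\Delta$ is not, the pair $(X,\Delta+G)$ has $\kappa_\sigma\geq 0$; by Theorem \ref{thm:birkar} (after checking $\kappa(X,K_X+\Delta+G)\geq 0$, which follows from the existence of good models in lower dimensions via the usual argument, or can be arranged by perturbing $G$) the pair $(X,\Delta+G)$ admits a log terminal model. Then I would run the $(K_X+\Delta)$-MMP with scaling of $G$ starting from a suitable model: at each step the scaling numbers $\lambda_i$ decrease to some limit, and the key point is that this limit equals $\tau$ and is attained, i.e.\ the MMP with scaling terminates (using the lower-dimensional MMP hypothesis, as in \cite{Bir11}) with a model $g\colon X\dashrightarrow Z$ on which $K_Z+\Delta_Z+\tau G_Z$ is nef. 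In particular $\tau\in\Q$.

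Next I would argue that $(X,\Delta+\tau G)$ has a good model. On $Z$ the divisor $K_Z+\Delta_Z+\tau G_Z$ is nef but $K_Z+\Delta_Z+(\tau-\varepsilon)G_Z$ is not pseudoeffective for $\varepsilon>0$; running the $(K_Z+\Delta_Z+(\tau-\varepsilon)G_Z)$-MMP with scaling of $\tau G_Z$ is $(K_Z+\Delta_Z+\tau G_Z)$-trivial and therefore terminates with a Mori fibre space $h\colon Z'\to T$ with $\dim T<n$, on which $K_{Z'}+\Delta_{Z'}+\tau G_{Z'}$ is relatively trivial, hence pulled back from $T$. Restricting to a general fibre $F$ of $h$, adjunction gives a klt pair $(F,\Delta_F)$ of dimension $<n$ with $(K_{Z'}+\Delta_{Z'}+\tau G_{Z'})|_F\sim_\Q K_F+\Delta_F$, which by the dimension $\le n-1$ hypothesis has a good model; in particular $\kappa(F,K_F+\Delta_F)=\kappa_\sigma(F,K_F+\Delta_F)\ge 0$. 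Combining $\kappa_\sigma\ge 0$ on a general fibre with the fact that the divisor is pulled back from the base, one gets $\kappa_\sigma(Z',K_{Z'}+\Delta_{Z'}+\tau G_{Z'})=\dim T+\kappa_\sigma(F,\cdot)$ and similarly for $\kappa$ (using the easy additivity for the numerical dimension of a pullback from the base plus a relatively trivial class, together with $C_{n,m}$-type statements that hold here because the base has dimension $<n$ and everything in sight has a good model in lower dimensions), forcing $\kappa=\kappa_\sigma$ on $Z'$. By Lemma \ref{lem:Kappa=KappaSigma} the pair $(Z',\Delta_{Z'}+\tau G_{Z'})$ has a good model, and since $Z'$ is connected to $X$ by a sequence of $(K_X+\Delta+\tau G)$-non-positive birational maps, $(X,\Delta+\tau G)$ has a good model too; in particular $\kappa(X,K_X+\Delta+\tau G)\ge 0$.

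**Main obstacle.** The delicate point is the passage from the general fibre back to $Z'$ — i.e.\ deducing $\kappa(Z',K_{Z'}+\Delta_{Z'}+\tau G_{Z'})=\kappa_\sigma(Z',K_{Z'}+\Delta_{Z'}+\tau G_{Z'})$ from the corresponding equality on a general fibre of the Mori fibre space. One must be careful that the adjoint divisor is genuinely numerically (indeed $\Q$-linearly, up to the MMP steps) pulled back from $T$ so that both invariants add the dimension of the base; this uses that the Mori fibre space is $(K_{Z'}+\Delta_{Z'}+\tau G_{Z'})$-trivial and a base-change/flattening argument to control the Kodaira dimension of the total space in terms of the base and the fibre, which is where the full strength of the inductive hypothesis on good models in dimension $\le n-1$ (rather than merely abundance) is needed. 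Everything else — $\tau\in\Q$, termination of the relevant MMPs with scaling, and the final transfer of good models along a birational MMP — is standard given the results recalled above.
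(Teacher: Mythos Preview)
Your outline has a genuine gap in the first step, and it is precisely the step you label ``standard''. You propose to run the $(K_X+\Delta)$-MMP with scaling of $G$ and assert that the scaling numbers $\lambda_i$ decrease to $\tau$ and that this limit is \emph{attained}, so that $\tau\in\Q$. But none of the available termination results covers this situation: $K_X+\Delta$ is not pseudoeffective, and $G$ is merely effective, not ample. The results of \cite{BCHM} require scaling by an ample divisor (or a big boundary), and \cite{Bir11} requires the log canonical divisor one is running the MMP for to be pseudoeffective. Even getting the MMP with scaling of $G$ started is problematic: you need $K_X+\Delta+\lambda G$ nef for some $\lambda$, and your appeal to Theorem~\ref{thm:birkar} for a minimal model of $(X,\Delta+G)$ presupposes $\kappa(X,K_X+\Delta+G)\geq 0$, which is not given and does not follow from good models in lower dimensions alone (this is nonvanishing in dimension $n$). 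The suggested fix ``perturb $G$'' changes the threshold you are trying to compute.

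The paper's proof avoids this entirely. Instead of a single MMP with scaling of $G$, it takes a sequence of rationals $x_i\nearrow\tau$ and, for each $i$, runs the $(K_X+\Delta+x_iG)$-MMP with scaling of a fixed \emph{ample} divisor $A$; since $K_X+\Delta+x_iG$ is not pseudoeffective, this terminates with a Mori fibre space $Y_i\to Z_i$ by \cite{BCHM}. One then finds $\eta_i\in(x_i,\tau]$ with $K_{Y_i}+\Delta_i+\eta_iG_i$ trivial on the fibres, and the key new input --- which your outline misses --- is the ACC for log canonical thresholds \cite{HMX12}: it forces the $\eta_i$ to be eventually constant, hence equal to $\tau$, giving $\tau\in\Q$ and simultaneously producing the desired Mori fibre space with $K+\Delta+\tau G$ fibrewise trivial.

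For the second half, your instinct (reduce to a lower-dimensional klt pair via a Mori fibre space) is correct, but the ``$C_{n,m}$-type'' additivity you invoke is exactly the delicate point you yourself flag, and you do not say how to carry it out. The paper handles this cleanly: it runs an MMP over the base $Z_i$ (which terminates by \cite{Bir11} since $\dim Z_i>0$ makes the relative problem effectively lower-dimensional, and the fibrewise $\kappa_\sigma\leq 0$ has been established via Lemma~\ref{lem:fibre}), passes to the relative canonical model $W_{\can}$, which has dimension $<n$, and then uses Ambro's canonical bundle formula \cite{Amb05a} to produce a klt pair $(W_{\can},\Delta_{\can})$ with $K_{W_{\min}}+\Delta_{\min}\sim_\Q\varphi^*(K_{W_{\can}}+\Delta_{\can})$. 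The inductive hypothesis and Lemma~\ref{lem:Kappa=KappaSigma} then finish the argument without any appeal to additivity of $\kappa$ or $\kappa_\sigma$.
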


\begin{proof}
We follow closely the proof of \cite[Proposition 8.7, Theorem 8.8]{DHP13}. Fix an ample divisor $A$ on $X$. For any rational number $0\leq x\leq\tau$ let $y_x=\tau(X,\Delta+xG;A)$. Note that $y_\tau=0$ and that $y_x$ is a positive rational number for $0\leq x<\tau$ -- rationality follows from \cite[Corollary 1.1.7]{BCHM}, and positivity from the fact that $K_X+\Delta+xG$ is not pseudoeffective when $x<\tau$.

Let $(x_i)$ be an increasing sequence of non-negative rational numbers such that $\lim\limits_{i\to\infty}x_i=\tau$, and denote $y_i=y_{x_i}$. Fix $i$, let $f_i\colon X\dashrightarrow Y_i$ be the $(K_X+\Delta+x_i G)$-MMP with scaling of $A$, and denote by $\Delta_i$, $G_i$ and $A_i$ the proper transforms of $\Delta$, $G$ and $A$ on $Y_i$. By \cite[Corollary 1.3.3]{BCHM}, there is an extremal contraction $g_i\colon Y_i\to Z_i$ of fibre type such that 
$$K_{Y_i}+\Delta_i+x_iG_i+y_i A_i\equiv_{g_i}0.$$
Let $E_j$ be effective divisors on $Y_i$ whose classes converge to the class of $K_{Y_i} + \Delta_i + \tau G_i$ in $N^1(Y_i)_\R$, and let $C$ be a curve in $Y_i\setminus\bigcup\Supp E_j$ which is contracted by $g_i$. Then 
$$(K_{Y_i} + \Delta_i + \tau G_i) \cdot C \geq 0 \quad \text{and} \quad (K_{Y_i} + \Delta_i + x_i G_i + y_i A_i) \cdot C = 0.$$
Therefore, there exists a rational number $\eta_i\in(x_i,\tau]$ such that $(K_{Y_i}+\Delta_i+\eta_iG_i)\cdot C=0$, hence 
$$K_{Y_i}+\Delta_i+\eta_iG_i\equiv_{g_i}0$$
since all contracted curves are numerically proportional. In particular, if $F_i$ is a general fibre of $g_i$, and $\Delta_{F_i}=\Delta_i|_{F_i}$ and $G_{F_i}=G_i|_{F_i}$, then 
\begin{equation}\label{eq:6}
K_{F_i}+\Delta_{F_i}+\eta_iG_{F_i}\equiv 0.
\end{equation}
Denoting 
$$\tau_i=\max\{t\in\R\mid K_{F_i}+\Delta_{F_i}+tG_{F_i}\text{ is log canonical}\},$$
we have $x_i \leq \tau_i$ since $K_{F_i}+\Delta_{F_i}+x_iG_{F_i}$ is log canonical for every $i$. If $K_{F_i}+\Delta_{F_i}+\tau G_{F_i}$ is not log canonical for infinitely many $i$, then after passing to a subsequence we can assume that $\tau_i < \tau$ for all $i$, and since $x_i \leq \tau_i$ and $\lim x_i = \tau$, we can assume that the sequence $(\tau_i)$ is strictly increasing, which contradicts \cite[Theorem 1.1]{HMX12}. Therefore, $K_{F_i}+\Delta_{F_i}+\tau G_{F_i}$ is log canonical for $i\gg0$, and then \cite[Theorem 1.5]{HMX12} implies that the sequence $(\eta_i)$ is eventually constant, hence $\eta_i=\tau$ for $i\gg0$. In particular, $\tau\in\Q$.

Now, for the rest of the proof fix any such $i\gg0$ for which $\eta_i=\tau$, and let $(p,q)\colon W\to X\times Y_i$ be a resolution of the map $f_i$. 
$$\xymatrix{ & W \ar[ld]_{p} \ar[dr]^{q} & & \\
            X \ar@{-->}[rr]^{f_i} & & Y_i \ar[r]^{g_i} & Z_i
 }
$$
We may write
$$K_W+\Delta_W\sim_\Q p^*(K_X+\Delta+\tau G)+E,$$
where $\Delta_W$ and $E$ are effective $\Q$-divisors without common components. We want to prove that $(X, \Delta + \tau G)$ has a good minimal model, hence by Lemmas \ref{lem:properpullbacks} and \ref{lem:Kappa=KappaSigma}, it is enough to show that
\begin{equation}\label{eq:8}
\kappa(W,K_W+\Delta_W)=\kappa_\sigma(W,K_W+\Delta_W).
\end{equation}
If we denote $F_W=q^{-1}(F_i)\subseteq W$, then $q_*(K_{F_W}+\Delta_W|_{F_W})=K_{F_i}+\Delta_{F_i}+\tau G_{F_i}$, hence by Lemma \ref{lem:fibre} and by \eqref{eq:6},
\begin{equation}\label{eq:7}
\kappa_\sigma(F_W,K_{F_W}+\Delta_W|_{F_W})\leq\kappa_\sigma(F_i,K_{F_i}+\Delta_{F_i}+\tau G_{F_i})=0.
\end{equation}
When $\dim Z_i=0$, then $F_W=W$ and \eqref{eq:7} implies \eqref{eq:8} by \cite[Corollary V.4.9]{Nak04}. 

When $\dim Z_i>0$, then by Theorem \ref{thm:birkar} and by \cite[Theorem 1.1]{Fuj11} there exists a good model $(W,\Delta_W)\dashrightarrow (W_{\min},\Delta_{\min})$ of $(W,\Delta_W)$ over $Z_i$. Let $\varphi\colon W_{\min}\to W_\can$ be the corresponding fibration to the canonical model of $(W,\Delta_W)$ over $Z_i$. Since $K_W+\Delta_W$ is not big over $Z_i$ by \eqref{eq:7}, we have $\dim W_\can<\dim X$. By \cite[Theorem 0.2]{Amb05a}, there exists a divisor $\Delta_\can$ on $W_\can$ such that the pair $(W_\can,\Delta_\can)$ is klt and 
$$K_{W_{\min}}+\Delta_{\min}\sim_\Q\varphi^*(K_{W_\can}+\Delta_\can).$$
Since we assume the existence of good models for klt pairs in dimensions at most $n-1$, we have $\kappa(W_\can,K_{W_\can}+\Delta_\can)=\kappa_\sigma(W_\can,K_{W_\can}+\Delta_\can)$ by Lemma \ref{lem:Kappa=KappaSigma}, and hence \eqref{eq:8} holds by Lemma \ref{lem:properpullbacks}, which concludes the proof. 
\end{proof}

\begin{rem}\label{rem:DHP}
Let $(X,\Delta)$ be a uniruled klt pair such that $K_X$ is not pseudoeffective and $K_X+\Delta$ is pseudoeffective. A natural strategy to construct a good model of $(X,\Delta)$ is to run a $(K_X+\tau\Delta)$-MMP, where $\tau=\tau(X,0;\Delta)$, and which we know terminates with a good model $(Y,\Delta_Y)$ by Theorem \ref{thm:tau}. The main problem is that this MMP does not preserve sections of $K_X+\Delta$. An instructive example is when $K_X\sim_\Q{-}\tau\Delta$, where $\Delta$ is nef and not big, and for instance $\rho(X)=2$. Then one might want to run the $(K_X+(\tau-\varepsilon)\Delta)$-MMP with scaling of an ample divisor $A$, where $0<\varepsilon\ll1$. If $\Nef(X)\neq\Effb(X)$, then this MMP ends up with a model on which the proper transform of $K_X+\Delta$ is ample, regardless of the Kodaira dimension of $K_X+\Delta$.
\end{rem}

\begin{thm}\label{thm:reduction}
Assume the existence of good models for klt pairs in dimensions at most $n-1$, and the existence of good models for log smooth klt pairs $(X,\Delta)$ in dimension $n$ such that $|K_X|\neq\emptyset$. 

Let $(X,\Delta)$ be a log smooth log canonical pair of dimension $n$ and assume that there exists a $\Q$-divisor $D\geq0$ such that $K_X+\Delta\sim_\Q D$ and $\Supp\Delta=\Supp D$. Then 
$$\kappa(X,K_X+\Delta)=\kappa_\sigma(X,K_X+\Delta).$$
\end{thm}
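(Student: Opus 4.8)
The plan is to use a ramified cover to replace $(X,\Delta)$ by a log smooth klt pair whose canonical system is nonempty, and then to descend the equality $\kappa=\kappa_\sigma$ along the cover. First I would normalise the boundary: putting $S=\Supp\Delta=\Supp D$ and letting $\Delta_0$ be the reduced divisor supported on $S$, the pair $(X,\Delta_0)$ is again log smooth and log canonical and $K_X+\Delta_0\sim_\Q D_0:=D+(\Delta_0-\Delta)$, which is effective with support $S$ since $\Delta_0-\Delta\geq0$ is supported on $S$; by Lemma \ref{lem:2} this changes neither $\kappa$ nor $\kappa_\sigma$, so I may assume $\Delta$ is reduced, and if $S=\emptyset$ there is nothing to prove. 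Next, fix $m>0$ with $m(K_X+\Delta)\sim mD$, let $s\in H^0\big(X,\OO_X(m(K_X+\Delta))\big)$ be the section with $\ddiv s=mD$ under the isomorphism $\OO_X(m(K_X+\Delta))\cong\OO_X(mD)$, and let $\pi\colon X'\to X$ be the normalisation of an irreducible component of the degree-$m$ cyclic cover attached to $(\OO_X(K_X+\Delta),s)$ that dominates $X$. Since $\pi$ is \'etale over $X\setminus S$, the variety $X'$ is klt with $\Sing X'\subseteq\pi^{-1}(S)$; and a local computation of the ramification indices along $\Delta$ — the sort of analysis of valuations under finite morphisms carried out in Proposition \ref{pro:DicrepFinite} — shows that, writing $\Delta_{X'}$ for the reduced preimage of $\Delta$, one has $K_{X'}+\Delta_{X'}=\pi^*(K_X+\Delta)$, and that the tautological $m$th root of $s$ is a section of $\OO_{X'}(K_{X'}+\Delta_{X'})$ whose divisor $\pi^*D$ satisfies $\pi^*D\geq\Delta_{X'}$ (all its coefficients being positive integers). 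Dividing this section by the canonical section of $\OO_{X'}(\Delta_{X'})$ produces a nonzero section of $\OO_{X'}(K_{X'})$ — this is the global index one cover of Lemma \ref{lem:IndexOne}. Making this construction precise, in particular controlling the non-normality of the cyclic cover and the singularities of $X'$, is the main obstacle.

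Replacing $X'$ by a small $\Q$-factorialisation (which exists since $X'$ is klt and affects none of the relevant invariants), let $r\colon W\to X'$ be a log resolution, set $w=\pi\circ r$, and write $K_W+\Delta_W^+=r^*(K_{X'}+\Delta_{X'})+E_W^+$ with $\Delta_W^+\geq0$ of coefficients at most $1$ and $E_W^+\geq0$ an $r$-exceptional divisor sharing no component with $\Delta_W^+$. Then $(W,(1-\varepsilon)\Delta_W^+)$ is log smooth and klt for $0<\varepsilon\ll1$. Since $X'$ is klt it has rational singularities, so $r_*\omega_W=\omega_{X'}=\OO_{X'}(K_{X'})$ and hence $H^0(W,\OO_W(K_W))=H^0(X',\OO_{X'}(K_{X'}))\neq0$, i.e.\ $|K_W|\neq\emptyset$. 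By hypothesis $(W,(1-\varepsilon)\Delta_W^+)$ therefore has a good model, so $\kappa\big(W,K_W+(1-\varepsilon)\Delta_W^+\big)=\kappa_\sigma\big(W,K_W+(1-\varepsilon)\Delta_W^+\big)$ by Lemma \ref{lem:Kappa=KappaSigma}.

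Finally I would descend; I do this for $\kappa_\sigma$, the case of $\kappa$ being identical. As $w$ is proper and surjective, Lemma \ref{lem:properpullbacks} gives $\kappa_\sigma(X,K_X+\Delta)=\kappa_\sigma(W,w^*(K_X+\Delta))$, and $w^*(K_X+\Delta)=r^*(K_{X'}+\Delta_{X'})\sim_\Q r^*\pi^*D$. Since $E_W^+$ is effective and $r$-exceptional and $\pi^*D$ is $\Q$-Cartier, Lemma \ref{lem:properpullbacks} also gives $\kappa_\sigma(W,r^*\pi^*D)=\kappa_\sigma(W,r^*\pi^*D+E_W^+)$. Moreover every component of $\Delta_W^+$ lies in the support of $r^*\pi^*D+E_W^+$: this is clear for the strict transforms of the components of $\Delta_{X'}$, and for the exceptional ones it follows from $\Sing X'\subseteq\Supp\Delta_{X'}$. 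Hence, exactly as in the proof of Lemma \ref{lem:2}, subtracting $\varepsilon\Delta_W^+$ leaves $\kappa_\sigma$ unchanged, and since $r^*\pi^*D+E_W^+-\varepsilon\Delta_W^+\sim_\Q K_W+(1-\varepsilon)\Delta_W^+$ we get $\kappa_\sigma(X,K_X+\Delta)=\kappa_\sigma\big(W,K_W+(1-\varepsilon)\Delta_W^+\big)$. Combined with the previous paragraph, this proves $\kappa(X,K_X+\Delta)=\kappa_\sigma(X,K_X+\Delta)$.
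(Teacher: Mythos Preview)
Your overall strategy is the same as the paper's --- reduce to $\Delta$ reduced, take the cyclic cover $\pi\colon X'\to X$, pass to a log resolution, and descend --- but you take a genuinely different route to $|K_W|\neq\emptyset$. The paper inserts a dlt model $(Z,\Delta_Z)\to(X',\Delta')$, proves via Proposition~\ref{pro:DicrepFinite} that all discrepancies over $Z$ are integers, deduces on a log resolution $W\to Z$ that $K_W\sim_\Q G_W\geq0$ for an explicit Cartier divisor $G_W$, and then takes a final \'etale cover $T\to W$ to upgrade $\sim_\Q$ to $\sim$ and get $|K_T|\neq\emptyset$. You instead observe directly that $K_{X'}\sim G'-\Delta_{X'}\geq0$ as Weil divisors (since $G'=\pi^*D$ is effective Cartier with reduction $\Delta_{X'}$), and use that the normalised cyclic cover of a smooth variety branched along an SNC divisor has rational singularities, so $r_*\omega_W=\omega_{X'}$ and $|K_W|\neq\emptyset$. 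This neatly bypasses the dlt model, the integer-discrepancy computation, and the \'etale cover.

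There is one gap. You assert that ``$X'$ is klt'' and take a small $\Q$-factorialisation. For $(X',0)$ to be a klt pair one needs $K_{X'}$ to be $\Q$-Cartier, equivalently $\Delta_{X'}=(G')_{\mathrm{red}}$ to be $\Q$-Cartier; the cover $X'$ has only toric (abelian quotient) singularities locally analytically, but this does not force Zariski $\Q$-factoriality, and you give no argument. Fortunately the $\Q$-factorialisation is inessential: your rational-singularities argument for $|K_W|\neq\emptyset$ does not use it, and in the descent step the only divisor to which you apply the ``add an effective exceptional divisor'' clause of Lemma~\ref{lem:properpullbacks} is $r^*\pi^*D$, which is already $\Q$-Cartier since $mD$ is Cartier on $X$. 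The proof of that clause (via $P_\sigma$) only needs the pulled-back divisor to be $\Q$-Cartier, not the base to be $\Q$-factorial, so you can simply drop the $\Q$-factorialisation and cite the slightly more general statement. Alternatively, replacing the $\Q$-factorialisation by a dlt model of $(X',\Delta_{X'})$ (which exists unconditionally by Theorem~\ref{thm:dltblowup}(b)) also repairs the gap and brings you closer to the paper's argument.
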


\begin{proof}
Replacing $\Delta$ by $\lceil\Delta\rceil$, by Lemma \ref{lem:2} we may assume that the divisor $\Delta$ is reduced. In the course of the proof, we construct a tower of proper maps
$$(T,\Delta_T)\stackrel{\mu}{\lto} (W,\Delta_W)\stackrel{g}{\lto} (Z,\Delta_Z)\stackrel{f}{\lto} (X',\Delta_{X'})\stackrel{\pi}{\lto} (X,\Delta),$$
where $\pi$ and $\mu$ are finite, and $f$ and $g$ are birational, such that for each $\mathcal X\in\{T,W,Z,X'\}$ we have
$$\kappa(\mathcal X,K_{\mathcal X}+\Delta_{\mathcal X})=\kappa(X,K_X+\Delta)\quad\text{and}\quad\kappa_\sigma(\mathcal X,K_{\mathcal X}+\Delta_{\mathcal X})=\kappa_\sigma(X,K_X+\Delta).$$
The pair $(T,\Delta_T)$ will be log smooth with $|K_T|\neq\emptyset$ which allows us to conclude.

Let $m$ be the smallest positive integer such that $m(K_X+\Delta)\sim mD$, and denote $G=mD$. Let $\pi\colon X'\to X$ be the normalisation of the corresponding $m$-fold cyclic covering ramified along $G$. Note that $X'$ is irreducible by \cite[Lemma 3.15(a)]{EV92} since $m$ is minimal. Then there exists an effective Cartier divisor $G'$ on $X'$ such that 
$$\pi^*G=mG'\quad\text{and}\quad\pi^*(K_X+\Delta)\sim G',$$
and let $\Delta'=(G')_\textrm{red}$. By the Hurwitz formula, we have
$$K_{X'}+\Delta'=\pi^*(K_X+\Delta),$$
and the pair $(X',\Delta')$ is log canonical by Proposition \ref{pro:DicrepFinite}. By Theorem \ref{thm:dltblowup}, there exists a dlt model $f\colon (Z,\Delta_Z)\to X'$ of $(X',\Delta')$, and we have
$$\kappa(X,K_X+\Delta)=\kappa(Z,K_Z+\Delta_Z)\quad\text{and}\quad\kappa_\sigma(X,K_X+\Delta)=\kappa_\sigma(Z,K_Z+\Delta_Z)$$
by Lemma \ref{lem:properpullbacks}. Denote $G_Z=f^*G'$. We claim:
\begin{enumerate}
\item[(i)] for every geometric valuation $E'$ over $Z$ we have $a(E',Z,\Delta_Z)\in\Z$,
\item[(ii)] $\Supp\Delta_Z\subseteq\Supp G_Z$.
\end{enumerate}

To prove the claim, let $E'$ be a geometric valuation over $Z$. Then by Proposition \ref{pro:DicrepFinite}, there exists a geometric valuation $E$ over $X$ and an integer $1\leq r\leq m$ such that
\begin{equation}\label{eq:1}
a(E',Z,\Delta_Z)+1=a(E',X',\Delta')+1=r(a(E,X,\Delta)+1),
\end{equation}
where the first equality holds because $K_Z+\Delta_Z\sim_\Q f^*(K_{X'}+\Delta')$. Since $(X,\Delta)$ is log smooth and $\Delta$ is reduced, we have $a(E,X,\Delta)\in\Z$, which together with \eqref{eq:1} implies (i). 

To show (ii), let $S'$ be a component of $\Delta_Z$. Then $a(S',X',\Delta')=-1$ by the definition of dlt models. By Proposition \ref{pro:DicrepFinite}, there exists a geometric valuation $S$ over $X$ and an integer $1\leq r\leq m$ such that $\pi(c_{X'}(S'))=c_X(S)$ and
$$a(S',X',\Delta')+1=r(a(S,X,\Delta)+1).$$
This implies $a(S,X,\Delta)=-1$, thus $c_X(S)\subseteq\Supp\Delta$ because $(X,\Delta)$ is log smooth. From here we obtain $c_{X'}(S')\subseteq\pi^{-1}(\Supp\Delta)=\Supp G'$, and in particular $S'\subseteq\Supp G_Z$.

Now, if $g\colon W\to Z$ is a log resolution of the pair $(Z,\Delta_Z)$, by (i) above we may write
$$K_W+\Delta_W\sim_\Q g^*(K_Z+\Delta_Z)+E_W\sim_\Q g^*G_Z+E_W,$$
where $\Delta_W$ and $E_W$ are effective integral divisors with no common components. Then
$$\kappa(X,K_X+\Delta)=\kappa(W,K_W+\Delta_W)\quad\text{and}\quad\kappa_\sigma(X,K_X+\Delta)=\kappa_\sigma(W,K_W+\Delta_W)$$
by Lemma \ref{lem:properpullbacks}, and the divisor $G_W=g^*G_Z+E_W-\Delta_W$ is Cartier. We have 
$$K_W\sim_\Q G_W,$$
and we claim that $G_W\geq0$. Indeed, if $S$ is a component of $\Delta_W$, then $a(S,Z,\Delta_Z)=-1$, and hence by the definition of dlt singularities we have $c_Z(S)\subseteq\Supp\Delta_Z$. By (ii) above, this implies $\mult_Sg^*G_Z\geq1=\mult_S\Delta_W$, hence the claim follows. 

Now, consider the klt pair $(K_W,\frac12\Delta_W)$. Since $K_W+\frac12\Delta_W\sim_\Q G_W+\frac12\Delta_W$, $K_W+\Delta_W\sim_\Q G_W+\Delta_W$ and $\Supp(G_W+\frac12\Delta_W)=\Supp(G_W+\Delta_W)$, by Lemma \ref{lem:2} we have
$$\textstyle\kappa(X,K_X+\Delta)=\kappa(W,K_W+\frac12\Delta_W)\quad\text{and}\quad\kappa_\sigma(X,K_X+\Delta)=\kappa_\sigma(W,K_W+\frac12\Delta_W).$$
Let $k$ be the smallest positive integer such that $k(K_W-G_W)\sim0$, and let $\mu\colon T\to W$ be the corresponding $k$-fold \'etale covering. Then 
$$K_T=\mu^*K_W\sim\mu^*G_W,$$
and setting $\Delta_T=\mu^*(\frac12\Delta_W)$, the pair $(K_T,\Delta_T)$ is klt by Proposition \ref{pro:DicrepFinite}. We have
$$\kappa(X,K_X+\Delta)=\kappa(T,K_T+\Delta_T)\quad\text{and}\quad\kappa_\sigma(X,K_X+\Delta)=\kappa_\sigma(T,K_T+\Delta_T)$$
by Lemma \ref{lem:2}, hence $\kappa(X,K_X+\Delta)=\kappa_\sigma(X,K_X+\Delta)$ by our assumptions and by Lemma \ref{lem:Kappa=KappaSigma}. 
\end{proof}

\begin{rem}
With the notation from the proof of Theorem \ref{thm:reduction}, one can show that the variety $Z$ has canonical singularities, so that $Z$ is not uniruled by Theorem \ref{thm:BDPP}, without passing to a log resolution.
\end{rem}

\begin{rem}\label{rem:toroidal}
In the proof of Theorem \ref{thm:reduction}, $X'\setminus\Delta'\subseteq X'$ is a toroidal embedding since the pair $(X,\Delta)$ is log smooth \cite[Lemma 1.1]{Ara14}, i.e.\ it is locally analytically on $X'$ isomorphic to an embedding of a torus into a toric variety. By \cite[Theorem 0.2]{AW97}, there exists a toroidal resolution $h\colon (U,\Delta_U)\to(X',\Delta')$ and then $K_U+\Delta_U=h^*(K_{X'}+\Delta')$: indeed, locally in the analytic category both sides of this equation are trivial, which implies that all discrepancies are zero. This is all implicit already in \cite{KKMSD}. The pair $(U,\Delta_U)$ is log smooth, and as in the proof of Theorem \ref{thm:reduction}, one shows that $K_U$ is linearly equivalent to an effective Cartier divisor. Therefore, if one prefers toroidal embeddings, one can avoid the use of dlt models; however, compare to \cite[Section 5]{dFKX12}.
\end{rem}

Finally we can prove our main results.

\begin{proof}[Proof of Theorem \ref{thm:main}]
Let $(X,\Delta)$ be a uniruled klt pair. By replacing $(X,\Delta)$ by its terminal model, cf.~Theorem \ref{thm:dltblowup}(a), we may assume that the pair $(X,\Delta)$ is terminal, and thus that $K_X$ is not pseudoeffective by Theorem \ref{thm:BDPP}. Let $\tau=\tau(X,0;\Delta)=\min\{t\in\R\mid K_X+t\Delta\text{ is pseudoeffective}\}$. Since $K_X$ is not pseudoeffective and $K_X+\Delta$ is pseudoeffective, we have $0<\tau\leq1$. If $\tau=1$, then we conclude by Theorem \ref{thm:tau}. 

Therefore, we may assume that $\tau<1$, and hence by Theorem \ref{thm:tau} there exists a $\Q$-divisor $D_\tau\geq0$ such that $K_X+\tau\Delta\sim_\Q D_\tau$. This yields 
$$K_X + \Delta \sim_\Q D \geq 0 , \quad \text{where} \quad D =D _\tau + (1-\tau )\Delta.$$
In particular, $\Supp\Delta\subseteq\Supp D$. Let $f\colon Y\to X$ be a log resolution of the pair $(X,D)$. Then we may write
$$K_Y+\Gamma\sim_\Q f^*(K_X+\Delta)+E,$$
where $\Gamma$ and $E$ are effective $\Q$-divisors with no common components, and $\Gamma=f^{-1}_*\Delta$ since $(X,\Delta)$ is a terminal pair. In particular, if we denote $D_Y=f^*D+E$, then $K_Y+\Gamma\sim_\Q D_Y$ and $\Supp\Gamma\subseteq\Supp D_Y$. We have
$$\kappa(X,K_X+\Delta)=\kappa(Y,K_Y+\Gamma)\quad\text{and}\quad \kappa_\sigma(X,K_X+\Delta)=\kappa_\sigma(Y,K_Y+\Gamma)$$
by Lemma \ref{lem:properpullbacks}, hence by replacing $(X,\Delta)$ by $(Y,\Gamma)$ and $D$ by $D_Y$, we may assume that $(X,D)$ is a log smooth pair. Finally, by replacing $\Delta$ by $\Delta+\varepsilon D$ for $0<\varepsilon\ll1$, we may further assume that $\Supp\Delta=\Supp D$. We conclude by Theorem \ref{thm:reduction} and by Lemma \ref{lem:Kappa=KappaSigma}.
\end{proof}

\begin{proof}[Proof of Theorem \ref{thm:main2}]
Let $(X,\Delta)$ be a uniruled klt pair. As in the proofs of Theorems \ref{thm:main} and \ref{thm:reduction}, there exists a log smooth klt pair $(T,\Delta_T)$ such that $|K_T|\neq\emptyset$ and 
$$\kappa(X,K_X+\Delta)=\kappa(T,K_T+\Delta_T)\geq0\quad\text{and}\quad \kappa_\sigma(X,K_X+\Delta)=\kappa_\sigma(T,K_T+\Delta_T).$$
In particular, $T$ is not uniruled by Theorem \ref{thm:BDPP}. By Theorem \ref{thm:birkar}, there exists a log terminal model $(T,\Delta_T)\dashrightarrow (T',\Delta_{T'})$ of $(T,\Delta_T)$, hence 
$$\kappa(T',K_{T'}+\Delta_{T'})=\kappa_\sigma(T',K_{T'}+\Delta_{T'})$$
since we assume the abundance conjecture for non-uniruled pairs. We conclude by Lemmas \ref{lem:properpullbacks} and \ref{lem:Kappa=KappaSigma}.
\end{proof}

\begin{proof}[Proof of Theorem \ref{thm:main3}]
Immediate from Theorem \ref{thm:main}.
\end{proof}

\begin{rem}\label{rem:goodmodels}
Assume that for every smooth variety of dimension $n$ with $K_X$ pseudoeffective we have $\kappa(X,K_X)\geq0$. Then the previous proofs show that if good models exist for log smooth klt pairs $(X,\Delta)$ of dimension $n$ such that the linear system $|K_X|$ is not empty, then good models exist for klt pairs in dimension $n$. 

Indeed, by Theorem \ref{thm:main3} we only have to show that the assumptions imply the existence of good models for non-uniruled klt pairs in dimension $n$. Fix such a pair $(X,\Delta)$, and note that we may assume that the pair is terminal by Theorem \ref{thm:dltblowup}. Then $\kappa(X,K_X)\geq0$ by our assumption, hence there exists an effective divisor $D'$ such that $K_X\sim_\Q D'$. In particular, by denoting $D=D'+\Delta$ we have $K_X+\Delta\sim_\Q D$ and $\Supp \Delta\subseteq\Supp D$. As in the proof of Theorem \ref{thm:main}, by passing to a log resolution, we may assume that $(X,D)$ is log smooth. By replacing $\Delta$ by $\Delta+\varepsilon D$ for $0<\varepsilon\ll1$, we may further assume that $\Supp\Delta=\Supp D$, and we conclude by Theorem \ref{thm:reduction} and by Lemma \ref{lem:Kappa=KappaSigma}. 
\end{rem}

This leads to the following result.

\begin{lem}\label{lem:IndexOne}
Let $(X,\Delta)$ be a $\Q$-factorial terminal pair such that $\kappa(X,K_X)\geq0$. Then there exists a generically finite morphism $f\colon Y\to X$ from a smooth variety $Y$ and an effective $\Q$-divisor $\Gamma$ on $Y$ with simple normal crossings support such that the pair $(Y,\Gamma)$ is klt, $|K_Y|\neq\emptyset$ and
$$\kappa(X,K_X+\Delta)=\kappa(Y,K_Y+\Gamma)\quad\text{and}\quad \kappa_\sigma(X,K_X+\Delta)=\kappa_\sigma(Y,K_Y+\Gamma).$$
If $\Delta=0$, we may additionally assume that $\Gamma=0$.
\end{lem}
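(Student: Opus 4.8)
The plan is to follow the construction in the proof of Theorem \ref{thm:reduction} essentially verbatim, but starting one step earlier and keeping track of the extra conclusions. First I would pass to a terminal model of $(X,\Delta)$ — wait, $(X,\Delta)$ is already terminal and $\Q$-factorial by hypothesis, so there is nothing to do there. Since $\kappa(X,K_X)\geq0$, there is an effective $\Q$-divisor $D'\geq0$ with $K_X\sim_\Q D'$, and setting $D=D'+\Delta$ we get $K_X+\Delta\sim_\Q D\geq0$ with $\Supp\Delta\subseteq\Supp D$. Then I would take a log resolution $f_0\colon X_0\to X$ of $(X,D)$; since $(X,\Delta)$ is terminal we may write $K_{X_0}+\Gamma_0\sim_\Q f_0^*(K_X+\Delta)+E_0$ with $\Gamma_0=(f_0)^{-1}_*\Delta$ and $E_0\geq0$ exceptional without common components with $\Gamma_0$, and by Lemma \ref{lem:properpullbacks} the two Kodaira-type invariants are preserved; moreover $K_{X_0}+\Gamma_0\sim_\Q f_0^*D+E_0=:D_{X_0}$ and, after replacing $\Gamma_0$ by $\Gamma_0+\varepsilon D_{X_0}$ for $0<\varepsilon\ll1$ (harmless by Lemma \ref{lem:2}), we may assume $\Supp\Gamma_0=\Supp D_{X_0}$ on the log smooth pair $(X_0,D_{X_0})$.

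Next I would run exactly the tower from Theorem \ref{thm:reduction} applied to $(X_0,\lceil\Gamma_0\rceil)$: the cyclic cover $\pi\colon X'\to X_0$ ramified along $G$, a dlt model $f\colon Z\to X'$, a log resolution $g\colon W\to Z$, the auxiliary pair $(W,\tfrac12\Delta_W)$, and finally the $k$-fold étale cover $\mu\colon T\to W$ with $K_T\sim\mu^*G_W$ and $\Delta_T=\mu^*(\tfrac12\Delta_W)$. All of these maps compose to a single generically finite morphism $Y:=T\to X_0\to X$, the pair $(T,\Delta_T)$ is klt with $\Gamma:=\Delta_T$ of simple normal crossings support and $|K_T|\neq\emptyset$ (since $K_T$ is linearly equivalent to the effective divisor $\mu^*G_W$), and the chain of equalities in Theorem \ref{thm:reduction}, together with Lemma \ref{lem:properpullbacks} for the initial resolution $f_0$, gives $\kappa(X,K_X+\Delta)=\kappa(Y,K_Y+\Gamma)$ and $\kappa_\sigma(X,K_X+\Delta)=\kappa_\sigma(Y,K_Y+\Gamma)$. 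This handles the general case.

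For the case $\Delta=0$ I would instead take $D=D'$, so $K_X\sim_\Q D\geq0$ with $m(K_X)\sim mD$ for $m$ minimal, and run the same construction with $\Delta$ replaced by $0$ throughout. The only point needing care is that the original pair in Theorem \ref{thm:reduction} is log \emph{canonical} with $\Supp\Delta=\Supp D$, whereas now we have $\Delta=0$; but the proof there only uses $\Supp\Delta\subseteq\Supp D$ (via part (ii) of the claim and the passage to $(W,\tfrac12\Delta_W)$), and when $\Delta=0$ one simply takes a log resolution $f_0\colon X_0\to X$ of $D'$ with $K_{X_0}\sim_\Q f_0^*K_X+E_0$, $E_0\geq0$ exceptional, and proceeds — the dlt model and resolution then produce $\Delta_W$ equal to the reduced exceptional-plus-transform divisor, but the final étale cover is taken with respect to $K_W-G_W$ and $\Delta_T$ can be taken to be $0$ because in the $\Delta=0$ situation one never needs the $\tfrac12\Delta_W$ perturbation to make the pair klt: $(W,0)$ is already klt, being smooth. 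So one replaces $(W,\tfrac12\Delta_W)$ by $(W,0)$, sets $\Delta_T=0$, and concludes $\Gamma=0$.

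The main obstacle is bookkeeping rather than a genuinely new idea: one must check that the construction of Theorem \ref{thm:reduction} goes through when started from $\Delta$ with only $\Supp\Delta\subseteq\Supp D$ rather than equality (resolved by the $\varepsilon D$ perturbation, exactly as in the proof of Theorem \ref{thm:main}), and, for the $\Delta=0$ refinement, that the $\tfrac12\Delta_W$ trick — whose only purpose in Theorem \ref{thm:reduction} is to guarantee $(K_T,\Delta_T)$ is klt and to invoke Lemma \ref{lem:2} rather than Lemma \ref{lem:properpullbacks} across the étale cover — is dispensable when $W$ is smooth, so that $\Gamma=0$ survives. Since an étale cover is crepant, $(T,0)$ is klt and $\kappa$, $\kappa_\sigma$ of $K_T$ agree with those of $K_W\sim_\Q G_W$ by Lemma \ref{lem:properpullbacks}, closing the argument. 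No step should require any computation beyond those already performed in the proofs above.
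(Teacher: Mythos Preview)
Your treatment of the general case is fine and matches the paper: both simply observe that the first claim follows from rerunning the constructions in the proofs of Theorem~\ref{thm:main} and Theorem~\ref{thm:reduction}.

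The $\Delta=0$ case, however, has a genuine gap. You argue that the $\tfrac12\Delta_W$ device in Theorem~\ref{thm:reduction} exists ``only to make the pair klt'', so that when $W$ is smooth one may simply replace $(W,\tfrac12\Delta_W)$ by $(W,0)$. That is not the role of $\Delta_W$. The tower produces a nonzero $\Delta_W$ \emph{regardless} of the input boundary, because the cyclic cover ramifies along $\Supp D$ and the Hurwitz formula forces $\Delta'=(G')_{\mathrm{red}}$ on $X'$; this propagates through the dlt model and the log resolution. What the tower hands you is
\[
\kappa(W,K_W+\Delta_W)=\kappa(X,K_X+\Delta_X)\quad\text{and}\quad\kappa_\sigma(W,K_W+\Delta_W)=\kappa_\sigma(X,K_X+\Delta_X),
\]
and to pass from $K_W+\Delta_W$ down to $K_W$ you need, via Lemma~\ref{lem:2}, that $\Supp\Delta_W\subseteq\Supp G_W$. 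In the proof of Theorem~\ref{thm:reduction} one only checks $\mult_S g^*G_Z\ge 1=\mult_S\Delta_W$ for components $S$ of $\Delta_W$, giving $G_W\ge0$; this inequality can be an equality, so $\mult_S G_W$ may vanish and $\Supp\Delta_W\not\subseteq\Supp G_W$ in general. (Note also that your claim that ``the proof only uses $\Supp\Delta\subseteq\Supp D$'' is incorrect: the Hurwitz identity $K_{X'}+\Delta'=\pi^*(K_X+\Delta)$ with $\Delta'=(G')_{\mathrm{red}}$ needs $\Supp D\subseteq\Supp\Delta$, hence equality.)

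The paper repairs this by \emph{not} starting the tower with boundary $0$, but with $\Delta_X=\varepsilon D$ for $0<\varepsilon\ll1$, so that $0<\mult_E\Delta_X<\mult_E D$ for every component $E$ of $D$. Tracing this strict inequality through the construction is exactly what forces $\Supp G_W=\Supp(G_W+\Delta_W)$, and the paper flags this as ``crucially'' the point. Only then can Lemma~\ref{lem:2} be invoked to conclude $\kappa(W,K_W)=\kappa(X,K_X)$ and $\kappa_\sigma(W,K_W)=\kappa_\sigma(X,K_X)$, after which the final \'etale cover yields $(T,0)$ with $|K_T|\neq\emptyset$. Your argument is missing this mechanism, so as written it does not establish that $\Gamma$ may be taken to be $0$.
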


\begin{proof}
The first claim follows from the proof of Theorem \ref{thm:main}. When $\Delta=0$, as in Remark \ref{rem:goodmodels} we may assume that $X$ is smooth and that there exists a $\Q$-divisor $D\geq0$ with simple normal crossings support such that $K_X\sim_\Q D'$. Setting $\Delta_X=\varepsilon D$ and $D=D'+\Delta_X$ for a rational number $0<\varepsilon\ll1$, we have $K_X+\Delta_X\sim_\Q D$ and $0<\mult_E\Delta_X<\mult_E D$ for every component $E$ of $D$. Then with notation from the proof of Theorem \ref{thm:reduction}, we obtain a generically finite map $(W,\Delta_W)\to(X,\Delta_X)$ such that the pair $(W,\Delta_W)$ is log smooth,
$$\kappa(W,K_W+\Delta_W)=\kappa(X,K_X+\Delta_X)\quad\text{and}\quad\kappa_\sigma(W,K_W+\Delta_W)=\kappa_\sigma(X,K_X+\Delta_X),$$
and $K_W\sim_\Q G_W$ for some Cartier divisor $G_W$ such that -- crucially -- $\Supp G_W=\Supp(G_W+\Delta_W)$. In particular, by Lemma \ref{lem:2} this implies
$$\kappa(W,K_W)=\kappa(X,K_X)\quad\text{and}\quad\kappa_\sigma(W,K_W)=\kappa_\sigma(X,K_X).$$
Finally, one more \'etale cover allows to conclude as in the proof of Theorem \ref{thm:reduction}.
\end{proof}

\bibliographystyle{amsalpha}

\bibliography{biblio}

\end{document}